\newtheorem{theorem}{Theorem}[section]
\newtheorem{corollary}{Corollary}[theorem]
\newtheorem{proposition}{Proposition}[section]
\newtheorem{lemma}[theorem]{Lemma}
\newtheorem*{remark}{Remark}
\theoremstyle{definition}
\newcommand{\RN}[1]{%
	\textup{\uppercase\expandafter{\romannumeral#1}}%
}
\def\*#1{\mathbf{#1}}
\title[On the variation of curvature functionals]{On the variation of curvature functionals in space forms with application to a generalized Willmore energy\vspace{-6ex}}
\author{Anthony Gruber$^1$, Magdalena Toda$^2$, Hung Tran$^3$ \vspace{-2ex}} 
\address{$^{1,2,3}$\,Department of Mathematics and Statistics, Texas Tech University, Lubbock, TX 79409, USA}
\thanks{Correspondence to: Anthony Gruber, Department of Mathematics and Statistics, Texas Tech University, Lubbock, TX 79409, USA \\ E-mail address: anthony.gruber@ttu.edu}
\begin{document}
\raggedbottom

\maketitle

\begin{abstract}
Functionals involving surface curvature are important across a range of scientific disciplines, and their extrema are representative of physically meaningful objects such as atomic lattices and biomembranes. Inspired in particular by the relationship of the Willmore energy to lipid bilayers, we consider a general functional depending on a surface and a symmetric combination of its principal curvatures, provided the surface is immersed in a 3-D space form of constant sectional curvature. We calculate the first and second variations of this functional, extending known results and providing computationally accessible expressions given entirely in terms of the basic geometric information found in the surface fundamental forms. Further, we motivate and introduce the p-Willmore energy functional, applying the stability criteria afforded by our calculations to prove a result about the p-Willmore energy of spheres.

% \keywords{Curvature functionals \and Variational problems \and Surface immersions \and Willmore energy}
% \subclass{58E12 \and 53C42 \and 53A05 \and 53A35}

\end{abstract}

\section{Introduction}
The physics of elasticity has fascinated artists, mathematicians, and scientists alike throughout recorded history.  For centuries there have been studies on how matter twists and bends in space, and mathematics has proven to be a useful tool in aiding our understanding of this phenomenon. Beginning roughly with the work of Sophie Germain on the elastic theory of surfaces in \cite{germain1821} and continuing through the contemporary work of numerous authors (such as \cite{tu2004,joshi2007,capovilla2017,elliott2017,yang2018,paragoda2018} and the references therein), the mathematics of elasticity has a rich and interesting history. Recently, developments in our understanding of biological macromolecules have renewed interest in a certain class of problems involving curvature functionals\textemdash since many elastic surfaces can be realized as the minimizers of these mathematical objects.  In particular, since electron microscopy confirmed the existence of lipid bilayers in the 1950s \cite{sjostrand1958}, there have been several curvature-centric functional models proposed for describing the dynamics of macromolecules (e.g. \cite{canham1970,helfrich1973,toda2015}). One such model was proposed by Helfrich in 1973 \cite{helfrich1973} for lipid bilayers (which are thin enough to be modeled mathematically as 2-D surfaces) and has proven to be quite reliable in approximating the behavior of biomembranes.  The Helfrich model for membrane energy per unit area is given by the functional
\begin{equation}
    E_H(M) := \int_M k_c(2H+c_0)^2 + \overline{k}K\, dS,
\end{equation}
where $H$ is the membrane mean curvature, $K$ is its Gauss curvature, $\overline{k}, k_c$ are some rigidity constants, and $c_0$ is a constant known as the "spontaneous curvature".  Physically, this high dependence on curvature arises from hydrostatic pressure differences between the fluids internal and external to the membrane.

Another noteworthy curvature functional is the bending energy, which quantifies how much (on average) a surface $M$ deviates from being a round sphere.  Specifically, the bending energy functional is defined as
\begin{equation}\label{bending}
    \mathcal{B}(M) := \frac{1}{4}\int_M (\kappa_1-\kappa_2)^2\, dS = \int_M H^2 - K + k_0 \,\, dS,
\end{equation}
where $k_0$ is the sectional curvature of the ambient space. This type of energy was first considered by Sophie Germain in 1811 (see \cite{marques2014willmore}) as a model for the bending energy of a thin plate.  In particular, she
suggested that the bending energy be measured by an integral over the plate surface, taking as integrand some symmetric and even-degree polynomial in the principal curvatures.  Note that the functional (\ref{bending}) is one of the simplest models of this kind.

\begin{remark}
The bending energy also arises in the field of computer vision, where changes in surface curvature are used to simulate natural movement. On the other hand, it is known to these scientists as the surface torsion (see \cite{sequin1995}) due to how it measures the change in normal curvature of the surface.
\end{remark}

From a mathematical perspective, both the Helfrich energy and the bending energy are closely related to the conformally invariant (see \cite{white1973}) Willmore energy popularized in \cite{willmore1965}, which is defined as 
\begin{equation}
    \mathcal{W}(M) := \int_M H^2 + k_0 \,\, dS.
\end{equation} 
In fact, the Gauss-Bonnet theorem implies that these energies differ on closed surfaces by only a constant, since in this case the integral over $K$ is completely determined by the surface topology. The Willmore energy has been widely-studied (e.g. \cite{weiner1978,bryant1984,guo2007,dziuk2008,toda2015,marques2014,mondino2014,athukorallage2015}), though there are still many open questions about its behavior. Indeed, this topic has unified the work of mathematicians, physicists, and biologists in studying elastic phenomena, and has sparked what is now an active area of research.

% \begin{equation}
%     \tilde{\mathcal{W}}(M) = \frac{1}{4}\int_M (\kappa_1-\kappa_2)^2\, dS = \int_M H^2 - K + k_0 \,\, dS,
% \end{equation}
% where $k_0$ is the sectional curvature of the ambient space.  The Willmore energy has been widely-studied (e.g. \cite{weiner1978}, \cite{bryant1984}, \cite{guo2007}, \cite{toda2013}, \cite{marques2014}, \cite{mondino2014}, \cite{athukorallage2015}), and quantifies how much (on average) a surface $M$ deviates from being a round sphere. Further, in the case that $M$ is compact and without boundary\textemdash such as in the case of a red blood cell\textemdash the Gauss-Bonnet theorem implies that it suffices to study the critical points of the functional
% \begin{equation}
%     \mathcal{W}(M) = \int_M H^2 + k_0 \,\, dS,
% \end{equation} 
% which is known as the Willmore energy functional in a space form $\mathbb{M}^3(k_0)$.\footnote{The $k_0$ term is included in the integrand to preserve conformal invariance.  See \cite{bryant1984}, \cite{mondino2014} for details.}
% 

Beyond the Willmore energy, there are reasons in biology and quantum mechanics (see \cite{viswanathan1995,tu2004,siegel2006,filgueiras2012,atanasov2016}) that have led researchers to consider even more complicated curvature functionals, of which not much is yet known.  In particular, since quantum mechanical spaces frequently manifest themselves as Lorentzian manifolds, it is now of genuine physical and mathematical interest to consider surface immersions into ambient spaces different from $\mathbb{E}^3$.  Therefore, it is reasonable to approach this relatively concrete area of research from a position of generality; as physical models become more and more complicated, it will be useful to have a set of general results that can be specialized to any particular case at hand.

In accordance with this idea, it is natural to consider the functional seen in \cite{tu2004},
\begin{equation}\label{genfunc}
    \mathcal{F}(M) := \int_M \mathcal{E}(H,K) \, dS,
\end{equation}
of an immersed surface $M$ whose integrand is a general\footnote{Under some mild regularity assumptions, Newton's Theorem on symmetric polynomials implies that any symmetric polynomial in the principal curvatures $\kappa_1,\kappa_2$ of $M$ can be expressed as a smooth function $\mathcal{E}(H,K)$ of the mean and Gauss curvatures.} function of the mean curvature $H$ and Gauss curvature $K$. Taking into account the above discussion, it is further interesting to allow this immersion $M\subset \mathbb{M}^3(k_0)$ to take place in a 3-D space form of constant sectional curvature $k_0$.

General curvature functionals have been studied previously in works such as \cite{tu2004} and \cite{dogan2011} for surfaces $M\subset \mathbb{E}^3$, but as of yet these results have not been extended to more general immersions.  In light of the new consideration by mathematicians and scientists being given to surface immersions in space forms, it is worthwhile to have results that are applicable also to this more general situation.  To aid in this endeavor, the main results of this work are computationally accessible expressions for the first and second variations of $\mathcal{F}$, presented entirely in terms of classical geometric quantities.  First, note the following computational framework:

Let $U\subset \mathbb{R}^2$ be open and consider a one-parameter family of compactly supported variations of a surface $M\subset \mathbb{M}^3(k_0)$.  By reparametrizing if necessary (see \cite{clelland2017}), the variations may be assumed normal to $M$, hence given by the one-parameter family of immersions $\*r: U \times \mathbb{R} \to \mathbb{M}^3(k_0)$,
\begin{equation}\label{vary}
    \*r(\*x, t) := \*r_t(\*x) = \*r_0(\*x) + t\,u(\*x)\*N,
\end{equation}
where $\*r_0$ is the original immersion, $\*N$ is a unit normal field on $M$, and $u: U \to \mathbb{R}$ is a smooth function.  Note that since $\*r_t$ is an immersion for all $t$, the vector fields $\{\*r_i\}$ form a basis for the tangent space at each point $p\in M_t$, and that the normal velocity of this family is $\delta \*r := (d/dt)\big|_{t=0} \,\*r = u\,\*N$.  With the notation above and $h:TM\times TM\to \mathbb{R}$ denoting the shape operator of $M$, the main results are as follows.
\begin{theorem}\label{firstvar}
The first variation of the curvature functional $\mathcal{F}$ is given by
\begin{equation}\label{1var}
	\begin{split}
		&\delta \int_M \mathcal{E}(H,K) \, dS\\
		&= \int_M \bigg(\frac{1}{2}\mathcal{E}_H+ 2H\mathcal{E}_K\bigg) \Delta u + \bigg((2H^2-K+2k_0)\mathcal{E}_H+2HK\mathcal{E}_K-2H\mathcal{E}\bigg)u \\
		&\phantom{\int_M h} - \mathcal{E}_K \langle h, \text{Hess}\,u \rangle \,\, dS,
	\end{split}
\end{equation}
where $\mathcal{E}_H,\mathcal{E}_K$ denote the partial derivatives of $\mathcal{E}$ with respect to $H$ resp. $K$.
\end{theorem}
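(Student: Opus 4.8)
The plan is to reduce everything to the three basic first variations $\delta(dS)$, $\delta H$, and $\delta K$ and then to invoke the chain rule, since the theorem is stated in undifferentiated form (keeping $\Delta u$ and $\langle h, \text{Hess}\,u\rangle$ explicit), so no integration by parts will be needed. Writing $g_{ij}$ and $h_{ij}$ for the coefficients of the first and second fundamental forms and $\overline{\nabla}$ for the ambient Levi--Civita connection on $\mathbb{M}^3(k_0)$, I would first record the variations of these forms under the normal velocity $\delta\*r = u\*N$. Differentiating $g_{ij} = \langle \*r_i, \*r_j\rangle$ and using the Weingarten relation $\overline{\nabla}_{\*r_i}\*N = -h_i^{\ k}\*r_k$ gives $\delta g_{ij} = -2u\,h_{ij}$. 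The variation of $h_{ij} = \langle \overline{\nabla}_{\*r_i}\*r_j, \*N\rangle$ is the delicate one: it produces a Hessian term, a quadratic term in the second fundamental form, and\textemdash crucially\textemdash a term carrying the ambient curvature, namely $\langle \overline{R}(\*N, \*r_i)\*r_j, \*N\rangle = k_0\,g_{ij}$ in a space form. The result is $\delta h_{ij} = \text{Hess}_{ij}\,u - u\,(h^2)_{ij} + k_0 u\,g_{ij}$, where $(h^2)_{ij}$ is the square of the second fundamental form. Obtaining this curvature contribution with the correct sign is the principal obstacle, as it is exactly what distinguishes the space-form computation from the classical Euclidean one.

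With these variations in hand, the remaining steps are algebraic. From $\delta g_{ij} = -2u\,h_{ij}$ and Jacobi's formula for $\sqrt{\det g}$ one gets $\delta(dS) = \tfrac12 g^{ij}\,\delta g_{ij}\,dS = -2Hu\,dS$. Tracing $\delta h_{ij}$ against the (varying) metric and using $|h|^2 = 4H^2 - 2K + 2k_0$ together with $\text{Ric}(\*N,\*N) = 2k_0$ yields $\delta H = \tfrac12\Delta u + (2H^2 - K + 2k_0)u$. For $\delta K$ I would pass to the associated $(1,1)$ Weingarten operator $S$, defined by $h(X,Y) = \langle SX, Y\rangle$, so that $\delta S = \text{Hess}^\sharp u + u\,S^2 + k_0 u\,\mathrm{Id}$, and then apply Jacobi's formula $\delta\det S = (\det S)\,\mathrm{tr}(S^{-1}\delta S)$. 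The one subtle term is $(\det S)\,\mathrm{tr}(S^{-1}\text{Hess}^\sharp u)$; the key simplification is the two-dimensional Cayley--Hamilton identity $(\det S)\,S^{-1} = (\mathrm{tr}\,S)\,\mathrm{Id} - S = 2H\,\mathrm{Id} - S$, which converts this into $2H\Delta u - \langle h, \text{Hess}\,u\rangle$. Collecting the remaining lower-order terms and recalling $K = \det S + k_0$ then gives $\delta K = 2H\Delta u - \langle h, \text{Hess}\,u\rangle + 2HK u$.

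Finally, I would substitute into $\delta\int_M \mathcal{E}\,dS = \int_M \big(\mathcal{E}_H\,\delta H + \mathcal{E}_K\,\delta K + \mathcal{E}\,\delta(dS)/dS\big)\,dS$ and group by the operators acting on $u$. The coefficients of $\Delta u$ assemble to $\tfrac12\mathcal{E}_H + 2H\mathcal{E}_K$; the term $-\mathcal{E}_K\langle h, \text{Hess}\,u\rangle$ comes through unchanged from $\delta K$; and the zeroth-order coefficient of $u$ collects to $(2H^2 - K + 2k_0)\mathcal{E}_H + 2HK\mathcal{E}_K - 2H\mathcal{E}$, which is precisely the integrand of (\ref{1var}). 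The only genuine work is the careful bookkeeping of the three variations, with $\delta h_{ij}$ in the curved ambient space\textemdash and the $\delta K$ computation it feeds\textemdash being the step most prone to error.
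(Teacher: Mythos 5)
Your proposal is correct and follows essentially the same route as the paper: compute the pointwise evolution equations $\delta g$, $\delta(dS)$, $\delta h_{ij}$, $\delta H$, $\delta K$ (with the ambient curvature term $k_0 u\,g_{ij}$ in $\delta h_{ij}$ exactly as in the paper's Appendix) and then apply the chain rule with no integration by parts. The only cosmetic difference is that you extract $\delta K$ via Jacobi's formula and Cayley--Hamilton applied to the Weingarten operator, whereas the paper computes $\delta|h|^2$ and uses $|h|^2 = 4H^2 - 2K + 2k_0$; both yield the same identity $\delta K = 2H\Delta u - \langle h,\text{Hess}\,u\rangle + 2HKu$.
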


\begin{theorem}\label{secondvar}
At a critical immersion of $M$, the second variation of $\mathcal{F}$ is given by
\begin{equation}\label{2var}
	\begin{split}
	&\delta^2 \int_M \mathcal{E}(H,K)\, dS = \int_M  \bigg(\frac{1}{4}\mathcal{E}_{HH}+2H\mathcal{E}_{HK}+4H^2\mathcal{E}_{KK}+\mathcal{E}_K\bigg)(\Delta{u})^2 \, dS \\
	&+ \int_M  \mathcal{E}_{KK}\langle h,\text{Hess}\,u\rangle^2\, dS- \int_M \big(\mathcal{E}_{HK}+4H\mathcal{E}_{KK}\big)\,\Delta u \langle h,\text{Hess} \, u\rangle \, dS \\
	&+ \int_M \mathcal{E}_K\bigg(u\langle\nabla K,\nabla u\rangle -3u \langle h^2,\text{Hess}\,u\rangle -2\,h^2(\nabla u,\nabla u) - |\text{Hess}\,u|^2\bigg)\, dS  \\
	&+ \int_M\bigg((2H^2-K+2k_0)\mathcal{E}_{HH} + 2H(4H^2-K+4k_0)\mathcal{E}_{HK} + 8H^2K\mathcal{E}_{KK}\\
	&\qquad - 2H\mathcal{E}_H + (3k_0-K)\mathcal{E}_K - \mathcal{E} \bigg)u\Delta u \, dS \\
	&+ \int_M\bigg((2H^2-K+2k_0)^2\mathcal{E}_{HH}+4HK(2H^2-K+2k_0)\mathcal{E}_{HK}+4H^2K^2\mathcal{E}_{KK} \\
	&\qquad- 2K(K-2k_0)\mathcal{E}_K-2HK\mathcal{E}_H+2(K-2k_0)\mathcal{E}\bigg) u^2 \, dS \\
	&+ \int_M \big(2\mathcal{E}_H+6H\mathcal{E}_K-2(2H^2-K+2k_0)\mathcal{E}_{HK}-4HK\mathcal{E}_{KK}\big) u\langle h,\text{Hess}\, u\rangle \, dS \\
	&+ \int_M\big(\mathcal{E}_H+4H\mathcal{E}_K\big)h(\nabla u,\nabla u) \, dS + \int_M \mathcal{E}_H\, u\langle\nabla H,\nabla u\rangle\, dS \\
	&- \int_M \big(2(K-k_0)\mathcal{E}_K+H\mathcal{E}_H\big)|\nabla u|^2 \, dS,
	\end{split}
	\end{equation}
where the subscripts $\mathcal{E}_{HH}, \mathcal{E}_{HK}, \mathcal{E}_{KK}$ denote the second partial derivatives of $\mathcal{E}$ in the appropriate variables.
\end{theorem}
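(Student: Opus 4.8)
The plan is to obtain the second variation by differentiating the first variation of Theorem \ref{firstvar} once more in $t$ and then invoking the criticality hypothesis to discard all but one family of terms. Integrating the integrand of (\ref{1var}) by parts (there are no boundary contributions, since the variations are compactly supported) produces the Euler--Lagrange operator
\[
    W := \Delta\Big(\tfrac12\mathcal{E}_H + 2H\mathcal{E}_K\Big) - \mathrm{div}\,\mathrm{div}\big(\mathcal{E}_K\,h\big) + \big(2H^2 - K + 2k_0\big)\mathcal{E}_H + 2HK\mathcal{E}_K - 2H\mathcal{E},
\]
characterized by $\delta\mathcal{F} = \int_M W\,\phi\,dS$ for a normal variation of speed $\phi$. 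Because the same identity holds along the entire family, $\tfrac{d}{dt}\mathcal{F}(M_t) = \int_{M_t} W_t\,\phi_t\,dS_t$, where $\phi_t = \langle\partial_t\*r,\*N_t\rangle$ is the normal speed of $M_t$ (only this component contributes, by reparametrization invariance). Differentiating at $t=0$ and using that $W$ vanishes identically at a critical immersion, every term in which $\partial_t$ falls on $\phi_t$ or on $dS_t$ is annihilated, leaving the compact expression
\[
    \delta^2\mathcal{F} = \int_M u\,\delta W\,dS.
\]

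Next I would compute $\delta W$ term by term from the standard first-variation formulas for a normal deformation $\delta\*r = u\*N$ in $\mathbb{M}^3(k_0)$: the area element obeys $\delta(dS) = -2Hu\,dS$; the mean curvature obeys $\delta H = \tfrac12\Delta u + (2H^2 - K + 2k_0)u$; the second fundamental form obeys $\delta h_{ij} = \text{Hess}_{ij}\,u - u\,(h^2)_{ij} + k_0\,u\,g_{ij}$, whence $\delta K = 2H\Delta u - \langle h,\text{Hess}\,u\rangle + (\text{lower order})u$ follows from $K = \det h + k_0$ via $(\det h)\,h^{-1} = 2H\,g - h$. Combining these through the chain rule $\delta\mathcal{E}_H = \mathcal{E}_{HH}\delta H + \mathcal{E}_{HK}\delta K$, $\delta\mathcal{E}_K = \mathcal{E}_{HK}\delta H + \mathcal{E}_{KK}\delta K$ (and similarly for $\mathcal{E}$), together with the variations of the Laplacian and of the double divergence, assembles $\delta W$ as an expression involving up to fourth-order derivatives of $u$.

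The final step is to substitute $\delta W$ into $\int_M u\,\delta W\,dS$ and integrate by parts repeatedly to transfer derivatives onto the coefficients, thereby producing the symmetric quadratic form in (\ref{2var}): the fourth-order pieces collapse into the $(\Delta u)^2$, $\langle h,\text{Hess}\,u\rangle^2$, and $\Delta u\,\langle h,\text{Hess}\,u\rangle$ terms, while the lower-order pieces yield the remaining summands. Along the way I would use the Codazzi identity $\mathrm{div}\,h = 2\nabla H$ (valid in a space form) to generate the $\langle\nabla H,\nabla u\rangle$ and $\langle\nabla K,\nabla u\rangle$ terms, and I would reinvoke the Euler--Lagrange equation $W=0$ to rewrite $\mathrm{div}\,\mathrm{div}(\mathcal{E}_K h)$ in terms of $\Delta(\tfrac12\mathcal{E}_H + 2H\mathcal{E}_K)$ and the zeroth-order part of $W$, which is what allows the bare $\mathcal{E},\mathcal{E}_H,\mathcal{E}_K$ to enter the coefficients of the quadratic form.

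I expect the main obstacle to be the bookkeeping in this last step. On a curved surface the repeated integrations by parts force one to commute covariant derivatives, and each commutator introduces intrinsic curvature (hence, in two dimensions, $K$- and $k_0$-dependent) terms; tracking these correctly, and verifying that after using $W=0$ the genuinely fourth-order contributions reorganize into exactly the stated second-order-squared combinations, is where sign errors and dropped terms are most likely. A useful consistency check at the end is to specialize to $\mathcal{E} = H^2 + k_0$ and confirm that (\ref{2var}) reduces to the known second variation of the Willmore energy.
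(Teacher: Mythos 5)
Your strategy is sound but genuinely different from the paper's, and it is only a strategy: the decisive computation is deferred. The paper never forms the strong-form Euler--Lagrange operator $W$. Instead it differentiates the weak-form first variation directly, writing $\delta^2\mathcal{F}=\int_M \delta\big(\delta\mathcal{E}(H,K)\big)\,dS+\int_M\delta\mathcal{E}(H,K)\,\delta(dS)$ and applying $\delta$ term by term to the integrand of (\ref{1var}), in which the derivatives already sit on $u$. The key technical input there is Lemma \ref{vardelf}, giving $\delta(\Delta u)$ and $\delta\langle h,\text{Hess}\,u\rangle$ under a normal deformation; criticality is then invoked only to discard the block of terms proportional to $\dot u$, which reassembles into the first variation. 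Your route instead uses the classical Jacobi-operator reduction $\delta^2\mathcal{F}=\int_M u\,\delta W\,dS$ (valid since $W\equiv 0$ pointwise at a critical immersion kills every term where $\partial_t$ lands on the normal speed or on $dS_t$), followed by a symmetrizing integration by parts. What your route buys is a cleaner conceptual starting point and no need for Lemma \ref{vardelf} per se; what it costs is considerably more work downstream, since you must vary a fourth-order operator ($\Delta$ of functions of $H,K$ and $\mathrm{div}\,\mathrm{div}(\mathcal{E}_K h)$) and then commute covariant derivatives repeatedly to land on the stated quadratic form, with curvature terms appearing at each commutation. Be aware that essentially all of the content of Theorem \ref{secondvar} lives in that unexecuted bookkeeping: as written, your proposal establishes the framework and correctly records the needed evolution equations ($\delta(dS)$, $\delta H$, $\delta h_{ij}$, $\delta K$ all match the paper's appendix), but it does not verify a single coefficient of (\ref{2var}), so it cannot yet be regarded as a proof of the formula. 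Carrying out the $\mathcal{E}=H^2+k_0$ consistency check you mention would be a good first step, but the general case still has to be pushed through.
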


This provides a tool not found in current literature for studying the stability properties of elastic surfaces.  In particular, expressions (\ref{1var}) and (\ref{2var}) extend results found in \cite{tu2004} regarding the  variation of a general curvature functional for surfaces in $\mathbb{E}^3$, to surfaces immersed in an arbitrary space form.  Additionally, these expressions hold for surfaces $M$ with or without boundary, and require from their user only the computation of surface fundamental forms.  
\begin{remark}
Theorem \ref{secondvar} further provides the second variation of the Willmore energy as the special case $\mathcal{E} = H^2 + k_0$, which agrees with known results in \cite{elliott2017} and \cite{weiner1978} for immersions in $\mathbb{E}^3$ and in $S^3$, respectively.
\end{remark}

An application follows the discussion of this result, motivated by differences between the total mean curvature functional and the Willmore functional. Specifically, the \textit{$p$-Willmore energy} is introduced,
\begin{equation}
    \mathcal{W}^p(M) := \int_M H^p \, dS, \qquad p\geq 1,
\end{equation}
and the properties of round spheres are studied as a function of the exponent $p$, where it is shown that the stability of the sphere as a local minimum of $\mathcal{W}^p$ is generally dependent on the value of $p$. In particular, the accessibility of expression (\ref{2var}) allows for demonstration of the following:

\begin{theorem}\label{bigsphere}
The round sphere $S^2(r)$ immersed in Euclidean space is not a stable local minimum of $\mathcal{W}^p$ under general volume-preserving deformations for each $p > 2$. More precisely, the bilinear index form is negative definite on the eigenspace of the Laplacian associated to the first eigenvalue, and it is positive definite on the orthogonal complement subspace.
\end{theorem}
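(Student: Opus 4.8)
The plan is to specialize the second-variation formula of Theorem~\ref{secondvar} to the integrand $\mathcal{E}(H,K)=H^{p}$ with $k_{0}=0$ and to exploit the total umbilicity of $S^{2}(r)$. First I would record the geometry of the round sphere: $H=1/r$, $K=1/r^{2}$, $\nabla H=\nabla K=0$, and $h=r^{-1}g$. This last identity is the engine of the computation, since it collapses every Hessian pairing appearing in (\ref{2var}) to the Laplacian, $\langle h,\text{Hess}\,u\rangle=r^{-1}\Delta u$ and $\langle h^{2},\text{Hess}\,u\rangle=r^{-2}\Delta u$, and turns $h(\nabla u,\nabla u)$ into $r^{-1}|\nabla u|^{2}$. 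Because $\mathcal{E}=H^{p}$ satisfies $\mathcal{E}_{K}=\mathcal{E}_{HK}=\mathcal{E}_{KK}=0$, all terms of (\ref{2var}) carrying these factors vanish, and the two gradient terms $(\mathcal{E}_{H}+4H\mathcal{E}_{K})\,h(\nabla u,\nabla u)$ and $-(2(K-k_{0})\mathcal{E}_{K}+H\mathcal{E}_{H})|\nabla u|^{2}$ cancel one another on the sphere.

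Before invoking the second variation I would check criticality. Feeding $\mathcal{E}=H^{p}$ into Theorem~\ref{firstvar} and using $\int_{M}\Delta u\,dS=0$ on the closed surface reduces the first variation to $\delta\mathcal{W}^{p}=(p-2)\,r^{-(p+1)}\int_{M}u\,dS$. Thus $S^{2}(r)$ is a critical point of $\mathcal{W}^{p}$ among volume-preserving variations (those with $\int_{M}u\,dS=0$), with Lagrange multiplier $\lambda=(p-2)\,r^{-(p+1)}$ attached to the enclosed-volume functional; the Willmore threshold $p=2$ is exactly where this multiplier, and hence the obstruction, disappears.

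With criticality established, the surviving terms of (\ref{2var}) assemble into an expression of the form $a\int_{M}(\Delta u)^{2}\,dS+b\int_{M}u\,\Delta u\,dS+c\int_{M}u^{2}\,dS$ with constants $a,b,c$ depending only on $p$ and $r$, to which I would add the volume contribution $-\lambda\,\delta^{2}V$, where $\delta^{2}V=2H\int_{M}u^{2}\,dS=\tfrac{2}{r}\int_{M}u^{2}\,dS$ on the sphere. I would then diagonalize by expanding $u$ in Laplace--Beltrami eigenfunctions of $S^{2}(r)$, with eigenvalues $\mu_{k}=k(k+1)/r^{2}$ and the constants ($k=0$) removed by the volume constraint. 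Since every Hessian pairing was reduced to $\Delta$, the index form is \emph{diagonal} in this basis: on the $\mu_{k}$-eigenspace it equals $r^{-(p+2)}\,Q\big(k(k+1)\big)\int_{M}u^{2}\,dS$ for an explicit quadratic $Q$. The theorem then amounts to the assertions that $Q$ is negative at the first nonzero eigenvalue $k=1$ and positive for every $k\ge 2$, for all $p>2$.

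The main obstacle is the careful bookkeeping of the volume constraint, precisely because $S^{2}(r)$ is critical only \emph{modulo} that constraint: Theorem~\ref{secondvar} is stated at a critical immersion, so one cannot feed $\mathcal{W}^{p}$ into it blindly, and the Lagrange term $-\lambda\,\delta^{2}V$ (equivalently, the first-variation correction produced by deforming along a genuinely volume-preserving path rather than the affine normal path) must be tracked with the correct sign. The remaining difficulty is purely algebraic: after confirming $Q\big(k(k+1)\big)<0$ at $k=1$, one must verify that both roots of the upward-opening parabola $Q$ lie strictly below the second eigenvalue value $k(k+1)=6$, so that positivity holds on the \emph{entire} orthogonal complement and not merely on the lowest remaining eigenspace.
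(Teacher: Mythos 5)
Your overall route---specializing (\ref{2var}) to $\mathcal{E}=H^{p}$ with $k_{0}=0$, using total umbilicity to collapse every Hessian pairing to a multiple of $\Delta u$, confirming criticality modulo the constraint $\int_{M} u\,dS=0$, and then analyzing a quadratic form $a\int(\Delta u)^{2}+b\int u\Delta u+c\int u^{2}$ on Laplace eigenspaces---is essentially the paper's route (Corollary \ref{pwillvar}, equation (\ref{sph}), Propositions \ref{badsphere} and \ref{goodsphere}). The one harmless difference is your treatment of the orthogonal complement: you diagonalize and check that both roots of the upward parabola $Q(m)=\tfrac{p(p-1)}{4}m^{2}-(p^{2}-p-1)m+(p-1)(p-2)$, $m=k(k+1)$, lie below $6$, whereas the paper invokes the Poincar\'e inequality of Lemma \ref{EFH}. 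Both work, and yours gives the eigenspace-by-eigenspace statement the theorem actually asserts.

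The genuine problem is the Lagrange term $-\lambda\,\delta^{2}\mathcal{V}$ that you insist on adding and the paper does not. First, your sign for $\delta^{2}\mathcal{V}$ is opposite to the paper's: with the paper's conventions $\delta^{2}\mathcal{V}=-2H\int u^{2}\,dS=-\tfrac{2}{r}\int u^{2}\,dS$ (see the remark following Proposition \ref{goodsphere}), not $+\tfrac{2}{r}\int u^{2}\,dS$. Second, and fatally for your plan: with the correct sign and $\lambda=(p-2)r^{-(p+1)}$, the correction equals $+\tfrac{2(p-2)}{r^{p+2}}\int u^{2}\,dS$, which on the first eigenspace exactly cancels the value $\tfrac{2(2-p)}{r^{p+2}}\int u^{2}\,dS$ that (\ref{sph}) produces at $m=2$. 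This cancellation is forced by geometry: the first eigenfunctions are the components of the position vector, so to first order they generate translations of $S^{2}(r)$, which preserve both the enclosed volume and $\mathcal{W}^{p}$ identically; the honestly constrained second variation must therefore vanish on that eigenspace. So if you carry out your program with the correct sign you will prove that your corrected index form is \emph{zero}, not negative, on the first eigenspace, and the stated negative-definiteness becomes unreachable; with your written sign you would get a negative answer, but only by virtue of the sign discrepancy. The paper obtains the theorem by doing neither: in Proposition \ref{badsphere} the index form is taken to be formula (\ref{p2var}) itself restricted to mean-zero $u$, i.e., the second derivative along the affine path $\mathbf{r}_{0}+tu\mathbf{N}$, which preserves volume only to first order, with no multiplier correction at all. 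To reproduce the theorem you must adopt that convention explicitly; your (more standard) notion of volume-constrained second variation does not deliver the first assertion of Theorem \ref{bigsphere}.
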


\section{Preliminaries}
The following concepts and definitions are standard in the geometric literature; for more information see \cite{carmo1992,tu2017}.  Let $\*r: U\subset \mathbb{R}^2 \to \mathbb{M}^3(k_0)$ be a surface immersion with $\*r(U) = M$, so that the vectors $\{\*r_i\}$ form a basis for the tangent space $T_qM$ at each point $q=\*r(p)$.  Let $g(\cdot, \cdot) = \langle\cdot,\cdot\rangle$ denote the Riemannian metric on $M$ inherited from the ambient \textit{space form} $\mathbb{M}^3(k_0)$ of constant sectional curvature $k_0$.
For vector fields $\*X,\*Y$, denote the covariant derivative on $\mathbb{M}^3(k_0)$ (resp. $M$) by $D$ (resp.  $\nabla$).  Recall the familiar \textit{Riemann curvature tensor} of $\mathbb{M}^3(k_0)$ defined as the operator $\overline{R}(\*X,\*Y)(\cdot) =  [D_{\*X},D_{\*Y}](\cdot) - D_{[\*X,\*Y]}(\cdot)$ where $[\cdot,\cdot]$ is the standard Lie bracket on vector fields. If $\*X,\*Y$ are orthonormal and $\sigma = \text{span}\{\*X,\*Y\}$, denote the  \textit{sectional curvature} of $\sigma$ by $K(\sigma) = \langle \overline{R}(\*X,\*Y)\*Y,\*X \rangle$.  Note that if $\*X,\*Y$ are further orthogonal on a surface $M\subset \mathbb{M}^3(k_0)$, the \textit{intrinsic Gauss curvature} $K$ can then be expressed as $K= (1/2)\big\langle R(\*X,\*Y)\*Y,\*X \big\rangle$ where $R$ is the Riemann tensor on $M$ naturally inherited from $\mathbb{M}^3(k_0)$.

If $\*N$ is a smooth unit normal field on $M \subset \mathbb{M}^3(k_0)$, recall the familiar decomposition due to Gauss (see \cite{tu2017}), $D_\*X \*Y = \nabla_\*X \*Y + \RN{2}(\*X,\*Y)$, where the tensor $\RN{2}$ is the \textit{second fundamental form} on $M$.  Recall further that since $M$ is a hypersurface, we may express the second fundamental form as $\RN{2} = h\,\*N$ where $h:TM\times TM\to\mathbb{R}$ is called the \textit{shape operator} on $M$.  The eigenvalues of its matrix representation (after contraction once with the metric inverse) are the principal curvatures $\kappa_1$ and $\kappa_2$, which together define the \textit{mean curvature} $H = (1/2)(\kappa_1+\kappa_2)$ and the \textit{extrinsic Gauss curvature} $K_E=\kappa_1\kappa_2$ of the surface.

\begin{remark}
The ``extrinsic" qualifier on $K_E$ is used above in order to distinguish this quantity from the intrinsic Gauss curvature $K$, which is independent of the immersion $\*r$.  Indeed, Gauss's Theorema Egregium asserts that $K$ is expressible entirely in terms of the metric, while $K_E= \det{\RN{2}}$ is not.  Further details are found in \cite{galvez2009}.
\end{remark}
Moreover, using $\*W,\*Z$ for two other vector fields on $\mathbb{M}^3(k_0)$, one has the essential submanifold equations of Gauss-Codazzi-Mainardi-Peterson:
\begin{align}
    & \langle \overline{R}(\*X,\*Y)\*Z,\*W \rangle = \langle R(\*X,\*Y)\*Z, \*W \rangle - \langle \RN{2}(\*X,\*W), \RN{2}(\*Y,\*Z)\rangle \nonumber\\
    &\hspace{2.4cm}+ \langle \RN{2}(\*X,\*Z), \RN{2}(\*Y,\*W)\rangle, \label{Gauss} \\
    & \langle \RN{2}(\*X,\*Y),\*N \rangle = -\langle D_\*X\*N, \*Y\rangle, \label{Weingarten} \\
    & \big(\overline{R}(\*X,\*Y)\*Z\big)^\perp = (\nabla_\*X\RN{2})(\*Y,\*Z) - (\nabla_\*Y\RN{2})(\*X,\*Z). \label{Codazzi}
\end{align}
%that in this case we can express the components of the curvature tensor of $\mathbb{M}^3(k_0)$ as $R_{ijkl} = (\delta_{ik}\delta_{jl}-\delta_{jk}\delta_{il})k_0$.

\begin{remark}
In light of the Gauss equation (\ref{Gauss}), it is immediate that for $M \subset \mathbb{M}^3(k_0)$ one has the relationship $K_E = K - k_0$ between the extrinsic and intrinsic Gauss curvatures (see \cite{galvez2009}).
\end{remark}

Finally, note that at a regular point $p$ of the surface $M$, the exponential map $\exp_p: T_pM \to M$ affords a diffeomorphism between some neighborhoods $V\subset T_pM$ containing 0 and $W \subset M$ containing $p$.  This gives rise to a distinguished coordinate system on $\mathbb{M}^3(k_0)$ around $p$ which comes from exponentiating coordinate lines in $V$, known as \textit{normal coordinates} on $M$. These coordinates will be assumed unless otherwise stated.

% In these coordinates, the Riemann-Christoffel symbols vanish at $p$, making them useful for doing computations. 
%\input{06-LongPreliminaries.tex}

\section{The First Variation}
Given a physical model such as the curvature functional $\mathcal{F}(M)$, a basic question one can ask is where it is extremized.  That is, it is important to know what surface immersions are extremal for a given functional, because their image surfaces are good candidates for physically relevant objects (see \cite{helfrich1973,toda2015,elliott2017}).  To accomplish this for $\mathcal{F}$ its first variation is computed, yielding a PDE in the mean curvature $H$.  Solutions to this equation then provide the mean curvature functions corresponding to the surface immersions of interest.

\begin{lemma}\label{evols}
Let $\varepsilon>0$ and $\mathbf{r}: M\times (-\varepsilon,\varepsilon) \to \mathbb{M}^3(k_0)$ be a smooth family of compactly supported immersions of a surface $M$ evolving with normal velocity $\delta \mathbf{r} = u\, \*N$ where $\delta = (d/dt)\big|_{t=0}$ is the variational derivative operator.  There are the following evolution equations:
    \begin{align}
        &\delta g = -2u\,h,\\
        &\delta g^{-1} = 2\,u\,\hat{h} \\
        &\delta(dS) = -2Hu \, dS, \\
        &\delta (2H) = \Delta{u}+2u(2H^2-K+2k_0), \\
    	&\delta K = 2H\Delta{u} - \langle h, \text{Hess}\,u \rangle + 2HKu,
    \end{align}
where $dS$ is the volume form on $M$, $\Delta u$ is the Laplacian of $u$ with respect to the surface metric $g$, $\langle h,\text{Hess}\,u\rangle$ is the scalar product between $h$ and the Hessian of $u$, and $\hat{h}$ is the (2,0)-tensor $g^{-1} \cdot g^{-1} \cdot h$ formed by twice contracting the shape operator with the metric inverse.
\end{lemma}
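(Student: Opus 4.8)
The plan is to fix local coordinates $\{x^i\}$ on $M$, write $\mathbf{r}_i = \partial_i\mathbf{r}$ for the coordinate tangent fields, $g_{ij} = \langle\mathbf{r}_i,\mathbf{r}_j\rangle$, and $h_{ij}$ for the shape operator components, so that the Gauss and Weingarten relations read $D_i\mathbf{r}_j = \nabla_i\mathbf{r}_j + h_{ij}\mathbf{N}$ and $D_i\mathbf{N} = -h_i^{\,k}\mathbf{r}_k$. The engine behind every identity is that the variational derivative $\delta$ is realized as the ambient covariant derivative $D_t$ along the parameter, which commutes with the spatial derivatives $D_i$ up to ambient curvature because $[\partial_t,\partial_i]=0$. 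Differentiating $\mathbf{r}$ then yields the first primitive $\delta\mathbf{r}_i = D_i(u\mathbf{N}) = u_i\mathbf{N} - u\,h_i^{\,k}\mathbf{r}_k$, while imposing $\langle\mathbf{N},\mathbf{N}\rangle = 1$ and $\langle\mathbf{N},\mathbf{r}_i\rangle = 0$ forces the second primitive $\delta\mathbf{N} = -\nabla u$.

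From these two primitives the first three evolution equations fall out directly. Differentiating $g_{ij} = \langle\mathbf{r}_i,\mathbf{r}_j\rangle$ and retaining only the tangential part of $\delta\mathbf{r}_i$ gives $\delta g_{ij} = -2u\,h_{ij}$; differentiating $g^{ik}g_{kj} = \delta^i_j$ then gives $\delta g^{ij} = 2u\,\hat{h}^{ij}$; and the determinant formula $\delta\sqrt{\det g} = \tfrac12\sqrt{\det g}\,g^{ij}\delta g_{ij}$ collapses, via $g^{ij}h_{ij} = 2H$, to $\delta(dS) = -2Hu\,dS$.

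The crux is the variation of the second fundamental form, which I would compute from $\delta h_{ij} = \delta\langle D_i\mathbf{r}_j,\mathbf{N}\rangle$ by the product rule. The $\mathbf{N}$-paired term requires the commutation $D_tD_i\mathbf{r}_j = D_iD_t\mathbf{r}_j + \overline{R}(u\mathbf{N},\mathbf{r}_i)\mathbf{r}_j$, and this is the single place the ambient geometry enters: substituting the space-form curvature $\overline{R}(\mathbf{X},\mathbf{Y})\mathbf{Z} = k_0(\langle\mathbf{Y},\mathbf{Z}\rangle\mathbf{X} - \langle\mathbf{X},\mathbf{Z}\rangle\mathbf{Y})$ produces exactly the $k_0$-term. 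After expanding $D_iD_t\mathbf{r}_j$ with the Gauss and Weingarten relations, extracting the normal component, and recognizing $\partial_i\partial_j u - \Gamma_{ij}^k u_k$ as the intrinsic Hessian, one arrives at $\delta h_{ij} = (\text{Hess}\,u)_{ij} - u\,(h^2)_{ij} + u\,k_0\,g_{ij}$. I expect this to be the main obstacle, since it demands careful bookkeeping of tangential versus normal parts across several ambient covariant derivatives; the space-form curvature identity is the essential ingredient absent from the Euclidean treatment.

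The last two equations then follow algebraically. Writing $2H = g^{ij}h_{ij}$ and applying the product rule gives $\delta(2H) = (\delta g^{ij})h_{ij} + g^{ij}\delta h_{ij}$; inserting the results above and using $|h|^2 = g^{ij}(h^2)_{ij} = 4H^2 - 2K_E$ together with $K_E = K - k_0$ reduces this to $\Delta u + 2u(2H^2 - K + 2k_0)$. For the Gauss curvature I would exploit $2K_E = (2H)^2 - |h|^2$, so that $\delta K = \delta K_E = 2H\,\delta(2H) - \tfrac12\,\delta|h|^2$; evaluating $\delta|h|^2$ from $\delta g^{ij}$ and $\delta h_{ij}$ and collapsing the resulting cubic traces through the Newton identity $2H|h|^2 - \text{tr}(h^3) = 2HK_E$ leaves, after restoring $k_0$, the claimed $\delta K = 2H\Delta u - \langle h,\text{Hess}\,u\rangle + 2HKu$.
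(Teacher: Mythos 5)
Your proposal is correct, and its overall architecture coincides with the paper's: both proofs start from the primitives $\delta\mathbf{r}_i = u_i\mathbf{N} - u\,h_i^{\,k}\mathbf{r}_k$ and $\delta\mathbf{N} = -\nabla u$, derive $\delta g$, $\delta g^{-1}$, and $\delta(dS)$ exactly as you do, and finish by combining $\delta(2H) = (\delta g^{ij})h_{ij} + g^{ij}\delta h_{ij}$ with $\delta K = \delta K_E = 2H\,\delta(2H) - \tfrac12\delta|h|^2$ and the same cubic trace identity (the paper writes it as $8H^3 = \kappa_1^3+\kappa_2^3+6H(K-k_0)$, which is your Newton identity in disguise). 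The one genuine divergence is at the step you correctly identify as the crux, the variation of $h_{ij}$. The paper differentiates $h_{ij}=\langle\mathbf{N},\mathbf{r}_{ij}\rangle$ and injects the ambient curvature by decomposing $\mathbf{r}_{ij}$ with an explicit position-vector term $-g_{ij}k_0\mathbf{r}$, i.e.\ it works in an extrinsic model of $\mathbb{M}^3(k_0)$ and treats $\mathbf{N}_{ij}$ as an ordinary second partial derivative. You instead obtain the $k_0$-term from the commutator $D_tD_i\mathbf{r}_j = D_iD_t\mathbf{r}_j + \overline{R}(u\mathbf{N},\mathbf{r}_i)\mathbf{r}_j$ together with the constant-curvature formula for $\overline{R}$. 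Both routes land on $\delta h_{ij} = (\text{Hess}\,u)_{ij} - u(h^2)_{ij} + k_0ug_{ij}$; yours is model-independent and makes it unambiguous that the ambient geometry enters only through $\overline{R}$, while the paper's is more elementary once an embedding of the space form is fixed, at the cost of some bookkeeping that is cleaner to justify with your covariant formulation.
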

\begin{proof}
See Appendix.
\end{proof}

It is now straightforward to compute an expression for the first variation of the functional $\mathcal{F}(M)$.
\begin{proposition}\label{smallvar}
Let $M \subset \mathbb{M}^3(k_0)$ be a surface immersed in a space form of constant sectional curvature $k_0$, and let $\*r(\cdot,t)$ be a 1-parameter family of immersions as in (\ref{vary}). Then, the first variation of the curvature functional (\ref{genfunc}) is given by
 \begin{equation}
	\begin{split}
		&\delta \int_M \mathcal{E}(H,K) \, dS\\
		&= \int_M \bigg(\frac{1}{2}\mathcal{E}_H+ 2H\mathcal{E}_K\bigg) \Delta u + \bigg((2H^2-K+2k_0)\mathcal{E}_H+2HK\mathcal{E}_K-2H\mathcal{E}\bigg)u \\
		&\phantom{\int_M h} - \mathcal{E}_K \langle h, \text{Hess}\,u \rangle \,\, dS,
	\end{split}
\end{equation}
where $\mathcal{E}_H,\mathcal{E}_K$ denote the partial derivatives of $\mathcal{E}$ with respect to $H$ resp. $K$.
\end{proposition}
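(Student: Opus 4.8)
The plan is to reduce everything to the evolution equations already established in Lemma~\ref{evols}, so that the first variation becomes a direct application of the product and chain rules. Since the integrand $\mathcal{E}(H,K)\,dS$ depends on the parameter $t$ both through the scalar $\mathcal{E}$ and through the induced volume form $dS$, I would first split off these two contributions by writing
\begin{equation*}
\delta \int_M \mathcal{E}(H,K)\,dS = \int_M (\delta\mathcal{E})\,dS + \int_M \mathcal{E}\,\delta(dS).
\end{equation*}
The interchange of $\delta$ with the integral is justified by the compact support of the variation, which allows one to pull the integral back to the fixed parameter domain $U$ and differentiate under the integral sign with no boundary contributions arising from the moving domain.

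Next I would expand $\delta\mathcal{E}$ by the chain rule as $\delta\mathcal{E} = \mathcal{E}_H\,\delta H + \mathcal{E}_K\,\delta K$, being careful that Lemma~\ref{evols} supplies $\delta(2H)$ rather than $\delta H$ directly; dividing by two gives $\delta H = \tfrac{1}{2}\Delta u + u(2H^2 - K + 2k_0)$. Substituting this together with $\delta K = 2H\Delta u - \langle h, \text{Hess}\,u\rangle + 2HKu$ and $\delta(dS) = -2Hu\,dS$ into the display above produces an integrand that is a linear combination of the three basic quantities $\Delta u$, $u$, and $\langle h, \text{Hess}\,u\rangle$.

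The final step is purely a matter of collecting like terms. The coefficient of $\Delta u$ assembles from the $\tfrac{1}{2}\mathcal{E}_H$ contributed by $\delta H$ and the $2H\mathcal{E}_K$ contributed by $\delta K$; the coefficient of $u$ collects the curvature term $(2H^2-K+2k_0)\mathcal{E}_H$ from $\delta H$, the $2HK\mathcal{E}_K$ from $\delta K$, and the $-2H\mathcal{E}$ from $\delta(dS)$; and the lone term $-\mathcal{E}_K\langle h,\text{Hess}\,u\rangle$ comes from $\delta K$. This reproduces the stated formula exactly.

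Since Lemma~\ref{evols} carries out all of the genuine geometric work, I do not anticipate any real obstacle in this proposition: the argument is essentially bookkeeping built on those evolution equations. The only points requiring attention are the factor of $\tfrac{1}{2}$ distinguishing $\delta H$ from $\delta(2H)$ and the justification for differentiating under the integral over a time-dependent surface, both of which are handled routinely by the compact support assumption. I note that the formula is deliberately left in this non-reduced form; obtaining a divergence expression suitable for reading off the Euler--Lagrange equation would require an additional integration by parts of the $\Delta u$ and $\langle h, \text{Hess}\,u\rangle$ terms, which is not needed for the statement as given.
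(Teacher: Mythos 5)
Your proposal is correct and follows essentially the same route as the paper's proof: split the variation into $\int_M(\mathcal{E}_H\,\delta H+\mathcal{E}_K\,\delta K)\,dS+\int_M\mathcal{E}\,\delta(dS)$, substitute the evolution equations from Lemma~\ref{evols} (with the factor of $\tfrac12$ converting $\delta(2H)$ to $\delta H$), and collect the coefficients of $\Delta u$, $u$, and $\langle h,\text{Hess}\,u\rangle$. No substantive differences.
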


\begin{remark}
The expression (\ref{1var}) is also a mild extension of the work in \cite{tu2004} done for a general curvature functional of a closed surface immersed in $\mathbb{E}^3$. To see this, note that if $M$ is closed and one defines the self-adjoint operator $\nabla\cdot \tilde{\nabla}u := 2H\Delta u - \langle h,\text{Hess}\,u\rangle$ as in \cite{tu2004}, integration by parts can be applied to (\ref{1var}) to write the Euler-Lagrange equation
\begin{equation}
    \bigg(\frac{1}{2}\Delta + \big(2H^2-K+2k_0\big)\bigg)\mathcal{E}_H + \big(\nabla\cdot \tilde{\nabla} + 2HK\big)\mathcal{E}_K - 2H\mathcal{E} = 0,
\end{equation}
extending the similar expression found in \cite{tu2004} to immersions in a general space form.
\end{remark} 

\begin{proof}[Proof of Proposition \ref{smallvar}]
Using Lemma \ref{evols}, it follows that
\begin{equation}
    \begin{split}
        &\delta \int_M \mathcal{E}(H,K)\, dS = \int_M \mathcal{E}_H (\delta{H})+\mathcal{E}_K(\delta{K}) \, dS + \int_M \mathcal{E}\, \delta(dS) \\
		&= \int_M \mathcal{E}_H\bigg(\frac{1}{2}\Delta u + (2H^2-K+2k_0)u\bigg) + \mathcal{E}_K\bigg(2H\Delta u - \langle h, \text{Hess}\,u \rangle + 2HKu\bigg) \\
		&\phantom{\int_M h} - 2H\mathcal{E}u \, \, dS \\
		&= \int_M \bigg(\frac{1}{2}\mathcal{E}_H+ 2H\mathcal{E}_K\bigg)\Delta u + \bigg((2H^2-K+2k_0)\mathcal{E}_H+2HK\mathcal{E}_K-2H\mathcal{E}\bigg)u \\
		&\phantom{\int_M h} -\mathcal{E}_K \langle h, \text{Hess}\,u \rangle \,\, dS,
	\end{split}
\end{equation}
establishing the claim.
\end{proof}

\begin{remark}
Evidently, formula (\ref{1var}) agrees with known expressions for the first variation of the Willmore functional $\mathcal{W}(M)$, e.g. those found in \cite{weiner1978,bryant1984,mondino2014,athukorallage2015,elliott2017}. In each case, the Euler-Lagrange equation for closed surfaces is recovered,
\begin{equation}
    \Delta H + 2H(H^2-K+k_0) = 0,
\end{equation}
which is expected based on the literature.
\end{remark}

Note that formula (\ref{1var}) is valid for surfaces with or without boundary, providing researchers the option of restricting study to a connected subset of the surface if desired.  In particular, naturally occurring lipid bilayers may have inhomogeneous protein inclusions that affect their material properties differently across the membrane (see \cite{kik2010}), so it is beneficial to have a model that can also accommodate such analysis.  Answering further questions related to the stability of such objects requires knowledge of higher-order changes in $\mathcal{F}$, so it is reasonable to further investigate the second variation.

% With that said, it is reasonable to want something stronger, since mathematical quantities that are physically relevant are not only critical but also stable for the functional in question.  Knowing this requires study of the second variation, which we now investigate.

%In accordance with classical theory, the utility of the first variation lies in providing a necessary condition for the immersion of a surface $M \subset \mathbb{M}^3(k_0)$ to be critical for a particular curvature functional.

%%%%%%%% WORK NEEDED HERE %%%%%%%%%%%%%%

%Of course, for physical studies we want to know more than this, since the mathematical quantities that are physically relevant are not only critical but also stable for the functional in question.  In order to investigate these concerns, we must know further information regarding the second variation, which we now investigate.
%\input{11-LongFirstvar.tex}

\section{The Second Variation}
A good expression for the second variation allows for the discussion of surface stability, which is important when drawing conclusions about physical models.  Though there may be many possible surfaces that are critical for a given curvature functional, there are frequently not as many that have the physically-desirable property of being stable under local deformations.  One example of this is seen in the catenoidal soap films that span two circular, coaxial wire loops, which are known to be minimal surfaces since surface tension forces them to be locally area-minimizing (see \cite{lautrup2011}).  In this case, stability is dependent on the sign of the second variation of the area functional, and it can be shown that for any fixed loop separation distance $z$ less than some critical value $z_0$ there are two observable catenoids that can form\textemdash only one of which is stable (see \cite{durand1981,toda2017}).  To study such stability questions in general for the functional $\mathcal{F}$, it is helpful to compute its second variation.

%Besides some additional generality to what is in current literature, our expression will have the advantage of being formulated entirely in terms of curvature invariants and fundamental forms, ......

% We first proceed with the following lemma. Though we believe its result to be known, we have not seen a proof in the literature.  Hence, we record one here for completeness.

\begin{lemma} \label{vardelf}
Let $\varepsilon>0$ and $\mathbf{r}: M\times (-\varepsilon,\varepsilon) \to \mathbb{M}^3(k_0)$ be a smooth family of compactly supported immersions of a surface $M$ evolving with normal velocity $\delta \mathbf{r} = u\, \*N$.  For any smooth $f: M \times \mathbb{R} \to \mathbb{R}$, the evolution of the surface Laplacian $\Delta f$ and the scalar product $\langle h,\text{Hess}\,f\rangle$ are given by
\begin{align}
	\delta(\Delta f) &=\Delta\dot{f} + 2u \langle h, \text{Hess}\,f \rangle + 2u \langle \nabla H, \nabla f \rangle \nonumber \\
	&+ 2 \, h (\nabla{u},\nabla{f}) - 2H \langle \nabla u, \nabla f \rangle,
\end{align}
\begin{equation}
    \begin{split}
        \delta\langle h,\text{Hess}\,f\rangle &= \langle h,\text{Hess}\,\dot{f}\rangle + \langle\text{Hess}\,u, \text{Hess}\,f\rangle + 3u\langle h^2,\text{Hess}\,f\rangle \\
        &+ uk_0 \Delta f + 2\,h^2(\nabla u,\nabla f) + \frac{1}{2}u\langle \nabla |h|^2, \nabla f\rangle - |h|^2\langle \nabla u,\nabla f\rangle,
    \end{split}
\end{equation}
    where $\dot{f}$ denotes the partial derivative of $f$ with respect to the variational parameter $t$, and $h^2 = g^{kl}h_{li}h_{kj}\, dx^i \otimes dx^j$ is the (0,2)-tensor formed by contracting $h$ with its matrix representation.
\end{lemma}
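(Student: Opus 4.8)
The plan is to compute both variations componentwise in local coordinates, where every object is expressed through the metric $g$, its inverse, the shape operator $h$, and ordinary derivatives of $f$, and then to differentiate under $\delta$ by the product rule together with the evolution equations of Lemma \ref{evols}. Two auxiliary ingredients drive the computation. The first is the classical formula for the variation of the Christoffel symbols,
\[
\delta\Gamma^m_{kl} = \tfrac{1}{2}g^{mn}\big(\nabla_k(\delta g_{ln}) + \nabla_l(\delta g_{kn}) - \nabla_n(\delta g_{kl})\big),
\]
into which I substitute $\delta g_{kl} = -2u\,h_{kl}$ from Lemma \ref{evols}. The second is the variation of the shape operator itself, $\delta h_{ij} = (\text{Hess}\,u)_{ij} - u\,(h^2)_{ij} + uk_0\,g_{ij}$, which is not listed in Lemma \ref{evols} and must be established separately; its $g$-trace can be checked against $\delta(2H)$ to fix the normalization. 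Throughout, the Codazzi equation (\ref{Codazzi}) is essential: since the ambient space is a space form its right-hand side vanishes, so $\nabla h$ is totally symmetric as a $(0,3)$-tensor. In contracted form this gives $g^{ij}\nabla_i h_{jl} = 2\nabla_l H$ and the identity $h^{kl}\nabla_k h_{ln} = \tfrac{1}{2}\nabla_n|h|^2$, each of which I will use once.

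For the Laplacian, write $\Delta f = g^{ij}\nabla_i\nabla_j f$ and apply the product rule. Differentiating the inverse metric with $\delta g^{-1} = 2u\hat h$ produces exactly the term $2u\langle h,\text{Hess}\,f\rangle$. For the remaining piece I use $\delta(\nabla_i\nabla_j f) = \nabla_i\nabla_j\dot f - (\delta\Gamma^m_{ij})\nabla_m f$, so that tracing with $g^{ij}$ yields $\Delta\dot f$ together with $-g^{ij}(\delta\Gamma^m_{ij})\nabla_m f$. Expanding the latter with the Christoffel variation above and replacing the divergence of $h$ via the contracted Codazzi identity $g^{ij}\nabla_i h_{jl}=2\nabla_l H$ collapses the expression to $2h(\nabla u,\nabla f) + 2u\langle\nabla H,\nabla f\rangle - 2H\langle\nabla u,\nabla f\rangle$, which is the claimed first identity.

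For the scalar product, write $\langle h,\text{Hess}\,f\rangle = \hat h^{kl}\nabla_k\nabla_l f$ and split $\delta$ across the two factors. In $\delta\hat h^{kl}$ the two inverse-metric variations each contribute a term $2u(h^2)^{kl}$, while $\delta h_{ij}$ contributes $(\text{Hess}\,u)^{kl} - u(h^2)^{kl} + uk_0 g^{kl}$; the $h^2$ pieces combine to $3u(h^2)^{kl}$, and contracting against $\nabla_k\nabla_l f$ gives the block $\langle\text{Hess}\,u,\text{Hess}\,f\rangle + 3u\langle h^2,\text{Hess}\,f\rangle + uk_0\Delta f$. The factor $\delta(\nabla_k\nabla_l f)$ supplies $\langle h,\text{Hess}\,\dot f\rangle$ plus a Christoffel term; expanding the latter and applying the total symmetry of $\nabla h$ to rewrite $h^{kl}\nabla_k h_{ln}$ as $\tfrac12\nabla_n|h|^2$ produces the remaining block $2h^2(\nabla u,\nabla f) + \tfrac12 u\langle\nabla|h|^2,\nabla f\rangle - |h|^2\langle\nabla u,\nabla f\rangle$, completing the identity.

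The main obstacle is establishing and correctly orienting the variation of the second fundamental form $\delta h_{ij}$, since it underlies every curvature term in the second identity and is in particular the sole source of the ambient contribution $uk_0\Delta f$. Deriving it requires differentiating $h_{ij} = -\langle D_{\*r_i}\*N,\*r_j\rangle$, tracking the evolution of the unit normal $\*N$ and of the tangent frame $\{\*r_i\}$ along the flow, and invoking the space-form curvature $\overline{R}$ through the Gauss relation to evaluate the resulting ambient term; the sign conventions must be pinned down so that the $g$-trace reproduces $\delta(2H)$ from Lemma \ref{evols}. Once $\delta h_{ij}$ is in hand, the remainder is careful but routine index bookkeeping, with each nontrivial cancellation reducing to a single application of Codazzi symmetry.
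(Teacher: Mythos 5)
Your proposal is correct and follows essentially the same route as the paper's proof: express $\Delta f$ and $\langle h,\mathrm{Hess}\,f\rangle$ in indices, differentiate via the variation of the Christoffel symbols with $\delta g=-2uh$, insert $\delta h_{ij}=u_{ij}-u(h^2)_{ij}+uk_0g_{ij}$ (which the paper derives in the Appendix while proving Lemma \ref{evols}), and use the Codazzi symmetry of $\nabla h$ to produce $2u\langle\nabla H,\nabla f\rangle$ and $\tfrac12 u\langle\nabla|h|^2,\nabla f\rangle$. The only differences are cosmetic: you use the covariant formula for $\delta\Gamma^m_{kl}$ and state Codazzi as total symmetry of $\nabla h$ in a space form, whereas the paper works in normal coordinates and carries the normal-valued curvature term until it is killed by contraction with the tangential $\nabla f$.
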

	
\begin{proof}
While this result is known to experts in the field, its proof is not found in the literature; a computation is hence recorded here for completeness.  First, note that Einstein summation over repeated indices will be assumed throughout.  Using $f_{ij,k}$ to denote $\nabla_k f_{ij}$ and assuming a normal coordinate system, the aim is to compute the variation of the Laplacian $\Delta f = g^{ij}f_{;ij} - \Gamma_{ij}^k f_k$. To that end, one has the variation of the Christoffel symbols $\Gamma_{ij}^k$,
\begin{equation} \label{csymbvar}
	\begin{split}
		\delta\Gamma_{ij}^k &= \frac{1}{2} (\delta g^{kl})(g_{jl,i} + g_{il,j} -  g_{ij,l}) + \frac{1}{2}g^{kl}\big((\delta g_{jl})_i + (\delta g_{il})_j - (\delta g_{ij})_l\big) \\
		&= uh^{kl}(g_{jl,i} + g_{il,j} -  g_{ij,l})) - u g^{kl}(h_{jl,i} + h_{il,j} - h_{ij,l}) \\
		&\quad - g^{kl}(u_i h_{jl} + u_j h_{il} - u_l h_{ij})\\
		&= - u g^{kl}(h_{jl,i} + h_{il,j} - h_{ij,l}) - g^{kl}(u_i h_{jl} + u_j h_{il} - u_l h_{ij}).
	\end{split}
\end{equation}
It follows that
\begin{equation} \label{bigvar}
	\begin{split}
		\delta(\Delta{f}) &= \delta(g^{ij}f_{;ij}) = \delta\big(g^{ij}(f_{ij} - \Gamma_{ij}^k f_{k})\big) = \delta(g^{ij}f_{,ij}) - \delta(g^{ij}\Gamma_{ij}^k f_{k}).
	\end{split}
\end{equation}
The terms of (\ref{bigvar}) will be considered separately. Relaxing the derivative convention, it is evident that
\begin{equation} \label{metvar}
	\begin{split}
		\delta(g^{ij}f_{ij}) &= (\delta g^{ij})f_{ij} + g^{ij}(\delta f_{ij}) = 2uh^{ij}f_{ij} + g^{ij}\nabla_i\nabla_j\dot f \\
		&= 2u \langle h, \text{Hess}f \rangle + \Delta\dot f.
	\end{split}
\end{equation}
Further, it follows by (\ref{csymbvar}) that
\begin{equation} \label{inprog}
	\begin{split}
		\delta(g^{ij}&\Gamma_{ij}^k f_k) = (\delta g^{ij})\Gamma_{ij}^k f_k + g^{ij}(\delta \Gamma_{ij}^k) f_k + g^{ij}\Gamma_{ij}^k (\delta f_k) = 0 + g^{ij}(\delta \Gamma_{ij}^k) f_k + 0 \\
		&= - g^{ij}(ug^{kl}( h_{jl,i} + h_{il,j} - h_{ij,l})) f_k - g^{ij}g^{kl}(u_i h_{jl} + u_j h_{il} - u_l h_{ij})f_k \\
		&= - 2ug^{ij}g^{kl} h_{jl,i}f_k + ug^{ij}g^{kl} h_{ij,l}f_k - 2g^{ij}g^{kl}u_i h_{jl}f_k + g^{ij}g^{kl}u_l h_{ij} f_k.
	\end{split}
\end{equation}
To further simplify this, recall the Codazzi-Mainardi equation (\ref{Codazzi}), expressed in coordinate form as 
\begin{equation} \label{coordCodazzi}
    \nabla_k h_{ij} - \nabla_j h_{ik} = \overline{R}_{ijkl}N^l = k_0(\delta_{ik}\delta_{jl}-\delta_{jk}\delta_{il})N^l,
\end{equation}
where $N^l$ are the components of the unit normal vector $\*N$ and the last equality is due to the ambient space having constant sectional curvature. It follows that
\begin{equation}
	\begin{split}
	    ug^{ij}g^{kl}\nabla_jh_{il}f_k &= u(\nabla^i h^j_i)f_j = u(\nabla^j h^i_i+\overline{R}^{\,ij}_{\, \,\,\,li}N^l)f_j = 2uH^jf_j - uf_j \overline{R}^{\,j}_l N^l \\
	    &= 2uH^jf_j - uk_0f_jg^j_l N^l= 2u\langle\nabla H,\nabla f\rangle - uk_0\langle \nabla f,\mathbf{N}\rangle \\
	    &= 2u \langle \nabla H,\nabla f \rangle,
	\end{split}
\end{equation}
since $\nabla f$ is tangent to the surface and hence orthogonal to $\mathbf{N}$.  In light of this, there is now the expression
\begin{equation}
		- 2ug^{ij}g^{kl}h_{jl,i}f_k + ug^{ij}g^{kl} h_{ij,l}f_k = -ug^{ij}g^{kl} h_{ij,l}f_k,
\end{equation}
and since normal coordinates are assumed, it is seen that
\begin{equation}
		2H_l = \nabla_l(g^{ij}h_{ij}) = (\nabla_l g^{ij})h_{ij} + g^{ij}(\nabla_l h_{ij}) = 0 + g^{ij}\nabla_l h_{ij}.
\end{equation}
Therefore, (\ref{inprog}) becomes
\begin{equation} \label{christvar}
	\begin{split}
		\delta(g^{ij}\Gamma_{ij}^k f_k) &= -ug^{ij}g^{kl}h_{ij,l}f_k - g^{ij}g^{kl}u_i h_{jl}f_k - g^{ij}g^{kl}u_j h_{il}f_k + g^{ij}g^{kl}u_l h_{ij} f_k \\
		&= -2u \langle \nabla H, \nabla f \rangle - 2 \, h (\nabla{u},\nabla{f}) + 2H \langle \nabla u, \nabla f \rangle,
	\end{split}
\end{equation}
and by (\ref{metvar}) and (\ref{christvar}) there is finally
\begin{equation} \label{lapvar}
	\begin{split}
		\delta(\Delta{f}) &=  \frac{d}{dt}(g^{ij}f_{ij}) - \frac{d}{dt}(g^{ij}\Gamma_{ij}^k f_k) \\
		&= 2u \langle h, \text{Hess}f \rangle + \Delta\dot f + 2u \langle \nabla H, \nabla f \rangle + 2 \, h (\nabla{u},\nabla{f}) - 2H \langle \nabla u, \nabla f \rangle,
	\end{split}
\end{equation}
completing the first calculation.

For the further computation of the variation of the scalar product $\langle h, \text{Hess}\,f \rangle$, first notice
\begin{equation}
 \nabla_l|h|^2 =\nabla_l (h^{ij}h_{ij}) = (\nabla_lh^{ij})h_{ij}+h^{ij}(\nabla_lh_{ij}) = 2h^{ij}(\nabla_lh_{ij}),
\end{equation}
since $h^{ij} = g^{ik}g^{jm}h_{km}$ and $\nabla_lg^{ij}=0$ in normal coordinates. Also, by the Codazzi-Mainardi equation (\ref{Codazzi}),
\begin{equation}
\begin{split}
h^{ij}(\nabla_ih_{jl})u^l &= h^{ij}\nabla_i(h_{lj})u^l = h^{ij}(\nabla_lh_{ij}-R_{iljk}N^k)u^l \\
&= \frac{1}{2}\langle\nabla|h|^2,\nabla u\rangle - 2Hk_0\langle \mathbf{N},\nabla u \rangle - k_0h(\mathbf{N},\nabla u) = \frac{1}{2}\langle\nabla|h|^2,\nabla u\rangle,
\end{split}
\end{equation}
since $\mathbf{N}\perp \nabla u$ and $d\mathbf{N}(\mathbf{N}) = 0$. So, it follows that
\begin{equation}
\begin{split}
\delta \langle h,\text{Hess}\,f\rangle &= \delta \big(g^{il}g^{jk}h_{kl}(f_{ij}-\Gamma^k_{ij}f_k)\big) \\
&= 4uh^{kj}h_k^i f_{ij}+g^{il}g^{jk}\big(\nabla_k\nabla_l u - uh^s_kh_{sl}+g_{kl}uk_0\big)f_{ij} \\
&+ h^{ij}\dot{f}_{ij} - h^{ij}(\delta\Gamma^k_{ij})f_k \\
&= \langle h,\text{Hess}\,\dot{f}\rangle + \langle\text{Hess}\,u, \text{Hess}\,f\rangle + 3u\langle h^2,\text{Hess}\,f\rangle \\
& + uk_0 \Delta f + 2\,h^2(\nabla u,\nabla f) + \frac{1}{2}u\langle \nabla |h|^2, \nabla f\rangle - |h|^2\langle \nabla u,\nabla f\rangle,
\end{split}
\end{equation}
completing the calculation.
\end{proof}
% The conclusion then follows from the identity $|h|^2 = 2(2H^2-K+k_0)$.

It is now reasonable to present the proof of Theorem \ref{secondvar}.
\begin{proof}[Proof of Theorem \ref{secondvar}]
Combining Lemmas \ref{evols} and \ref{vardelf} with Proposition \ref{firstvar}, the aim is to compute
\begin{equation} 
\begin{split}
 \delta^2\int_M &\mathcal{E}(H,K)\, dS = \delta\int_M \delta\, \mathcal{E}(H,K)\, dS \\
&= \delta\int_M \bigg(\frac{1}{2}\mathcal{E}_H+2H\mathcal{E}_K\bigg)\Delta u + \bigg((2H^2-K+2k_0)\mathcal{E}_H +2HK\mathcal{E}_K - 2H\mathcal{E}\bigg)u \\
&\phantom{\int_M h} - \mathcal{E}_K \langle h,\text{Hess}\,u\rangle\,\, dS \\
&= \int_M \delta\, \mathcal{E}(H,K)\, \delta(dS) + \int_M \delta\bigg[\bigg(\frac{1}{2}\mathcal{E}_H + 2H\mathcal{E}_K\bigg)\Delta u\bigg]\, dS \\
&+ \int_M \delta\bigg[\bigg((2H^2-K+2k_0)\mathcal{E}_H + 2HK\mathcal{E}_K-2H\mathcal{E}\bigg)u\bigg]\, dS \\
&- \int_M \delta\bigg(\mathcal{E}_K \langle h,\text{Hess}\,u\rangle\bigg)\,dS. \\
\end{split}
\end{equation}
First, there is the term
\begin{equation} \label{deltads}
\begin{split}
\int_M \delta\, &\mathcal{E}(H,K)\, \delta(dS) \\
&=\int_M \bigg(\frac{1}{2}\mathcal{E}_H + 2H\mathcal{E}_K\bigg)\Delta u\, \delta(dS) \\ 
&+ \int_M \bigg((2H^2-K+2k_0)\mathcal{E}_H+2HK\mathcal{E}_K-2H\mathcal{E}\bigg)u \, \delta(dS) \\
&-\int_M \mathcal{E}_K\langle h,\text{Hess}\,u\rangle\, \delta(dS) \\
&= \int_M \bigg( 2H\mathcal{E}_K\, u\langle h,\text{Hess}\,u\rangle - \big(H\mathcal{E}_H+4H^2\mathcal{E}_K\big)u\Delta u \\
&\phantom{\int_M h} + \big(4H^2\mathcal{E}-4H^2K\mathcal{E}_K-2H(2H^2-K+2k_0)\mathcal{E}_H\big)u^2 \bigg) \,dS.
\end{split}
\end{equation}

$\phantom{fixing the space}$ \\
$\phantom{fixing the space}$ \\
$\phantom{fixing the space}$ \\

It is then seen that
\begin{equation} \label{first}
\begin{split}
\int_M &\delta\bigg[\bigg(\frac{1}{2}\mathcal{E}_H + 2H\mathcal{E}_K\bigg)\Delta u\bigg]\, dS \\
&= \int_M \bigg(\frac{1}{2}\mathcal{E}_H+2H\mathcal{E}_K\bigg)\delta(\Delta u)\, dS + \int_M \delta\bigg(\frac{1}{2}\mathcal{E}_H+2H\mathcal{E}_K\bigg)\Delta u \, dS \\
&= \int_M \bigg(\frac{1}{2}\mathcal{E}_H+2H\mathcal{E}_K\bigg)\Delta\dot{u}\, dS \\
&+ \int_M\bigg(\frac{\mathcal{E}_{HH}}{4}+2H\mathcal{E}_{HK}+4H^2\mathcal{E}_{KK}+\mathcal{E}_K\bigg)(\Delta u)^2 \, dS \\
&+ \int_M \bigg(\frac{\mathcal{E}_{HH}}{2}\bigg(\frac{|h|^2}{2}+k_0\bigg)+H\mathcal{E}_{HK}\big(|h|^2+K+2k_0\big) \\
&\qquad + 4H^2K\mathcal{E}_{KK} + \big(|h|^2+2k_0\big)\mathcal{E}_K\bigg) u\Delta u \, dS \\
&+ \int_M \big(\mathcal{E}_H+4H\mathcal{E}_K\big)\bigg(h(\nabla u,\nabla u)+ u\langle h,\text{Hess}\,u\rangle + u\langle \nabla H,\nabla u\rangle -H|\nabla u|^2\bigg)\, dS \\
&- \int_M \bigg(\frac{\mathcal{E}_{HK}}{2}+2H\mathcal{E}_{KK}\bigg)\Delta u \langle h,\text{Hess}\,u\rangle \, dS.
\end{split}
\end{equation}

Next, there is
\begin{equation}\label{second}
\begin{split}
\int_M &\delta\bigg[\big((2H^2-K+2k_0)\mathcal{E}_H+2HK\mathcal{E}_K-2H\mathcal{E}\big)u\bigg]\, dS \\
&= \int_M \big((2H^2-K+2k_0)\mathcal{E}_H + 2HK\mathcal{E}_K-2H\mathcal{E}\big)\dot{u} \, dS\\
&+ \int_M u\bigg[\big(2H\delta(2H)-\delta K\big)\mathcal{E}_H+(2H^2-K+2k_0)\big(\mathcal{E}_{HH}\delta H +\mathcal{E}_{HK}\delta K\big) \\
&\phantom{\int_M h} + \delta(2H)K\mathcal{E}_K 
+ 2H(\delta K)\mathcal{E}_K + 2HK\big(\mathcal{E}_{KH}\delta H + \mathcal{E}_{KK}\delta K\big) - \delta(2H)\mathcal{E} \\
&\phantom{\int_M h} -2H\big(\mathcal{E}_H\delta H + \mathcal{E}_K \delta K\big)\bigg]\, dS\\
&=  \int_M \big((2H^2-K+2k_0)\mathcal{E}_H + 2HK\mathcal{E}_K-2H\mathcal{E}\big)\dot{u} \, dS \\
&+ \int_M \bigg[\mathcal{E}_H -2HK\mathcal{E}_{KK}-\mathcal{E}_{HK}\bigg(\frac{|h|^2}{2}+k_0\bigg)\bigg]u\langle h,\text{Hess}\,u\rangle\, dS\\
&+ \int_M \bigg[\frac{\mathcal{E}_{HH}}{2}\bigg(\frac{|h|^2}{2}+k_0\bigg)+H\mathcal{E}_{HK}\big(|h|^2+K+2k_0\big) \\
&\phantom{\int_M h} + 4KH^2\mathcal{E}_{KK}-H\mathcal{E}_H+K\mathcal{E}_K-\mathcal{E}\bigg]u\Delta u\, dS \\
&+ \int_M \bigg[\frac{\mathcal{E}_{HH}}{2}\bigg(\frac{|h|^4}{2}+|h|^2k_0+2k_0^2\bigg)+2HK\mathcal{E}_{HK}\big(|h|^2+2k_0\big)+4H^2K^2\mathcal{E}_{KK} \\
&\phantom{\int_M h}+ H\mathcal{E}_H\big(|h|^2-2K+2k_0\big) + K\mathcal{E}_K\big(|h|^2+2k_0\big) - \mathcal{E}\big(|h|^2+2k_0\big)\bigg]u^2\, dS.
\end{split}
\end{equation}

Further, it follows that 
\begin{equation} \label{third}
\begin{split}
\int_M &\delta\big(\mathcal{E}_K\langle h,\text{Hess}\,u\rangle\big) \, dS \\
&= \int_M \langle h,\text{Hess}\,u\rangle\big(\mathcal{E}_{KH}\delta H + \mathcal{E}_{KK}\delta K\big) \, dS + \int_M \mathcal{E}_K\delta\langle h,\text{Hess}\,u\rangle\, dS \\
&= \int_M \mathcal{E}_K \langle h,\text{Hess}\,\dot{u}\rangle \, dS + \int_M \bigg(\frac{\mathcal{E}_{KH}}{2}+2H\mathcal{E}_K\bigg)\Delta u\langle h,\text{Hess}\,u\rangle \, dS \\
&- \int_M \mathcal{E}_{KK}\big(\langle h,\text{Hess}\,u\rangle\big)^2\, dS + \int_M \mathcal{E}_K k_0\, u\Delta u\, dS \\
&+ \int_M u\langle h,\text{Hess}\,u\rangle\bigg[\mathcal{E}_{HK}\bigg(\frac{|h|^2}{2}+k_0\bigg) + 2HK\mathcal{E}_{KK}\bigg]\, dS \\
&+ \int_M \mathcal{E}_K\bigg(3u\langle h^2,\text{Hess}\,u\rangle + |\text{Hess}\,u|^2+2\,h^2(\nabla u \nabla u) \\
&\qquad + \frac{1}{2}u\langle \nabla|h|^2,\nabla u\rangle -|h|^2|\nabla u|^2\bigg)\, dS.
\end{split}
\end{equation}

Putting together (\ref{deltads}), (\ref{first}), (\ref{third}), (\ref{second}), and noting that the first variation vanishes at a critical immersion yields (\ref{2var}), hence proving the theorem. 
\end{proof}

Since the nonnegativity of $\delta^2\mathcal{F}(M)$ is equivalent to the stability of the surface $M$ under local deformations, (\ref{2var}) provides a useful tool for studying the critical immersions of curvature functionals. Specifically, recall the bilinear \textit{index form}
\begin{equation}
    I_\mathcal{F}(M)(u,u) = \delta^2\mathcal{F}(M),
\end{equation}
the sign of which determines the stability of $M$ under local deformations.
Due to its expression only in terms of rudimentary geometric quantities, it follows that (\ref{2var}) is straightforward to apply to various specific curvature functionals in use by researchers today. In particular, it is useful in studying the following generalization of the Willmore energy.

\section{Application: $p$-Willmore energy}
Consider the total mean curvature, given by
\begin{equation}
    \mathcal{H}(M) = \int_M H\, dS.
\end{equation}
It is well known that this functional possesses different geometric properties than the Willmore energy (see e.g. \cite{almgren1998,aleksandrov2009,dalphin2016,mantoulidis2017}). In particular, $\mathcal{H}$ is not conformally invariant, and as seen in \cite{dalphin2016} spheres are minimizing for $\mathcal{H}$ only among a certain subclass of closed surfaces.  In contrast, the round sphere (of any radius) is the unique global minimizer of the Willmore energy among all closed surfaces of genus 0, as was known to Willmore himself in \cite{willmore1965}.

In light of these differences between $\mathcal{H}$ and $\mathcal{W}$, it is meaningful to consider the following question: to what extent does the power of the mean curvature $H$ appearing in the integrand of a curvature functional influence its geometric behavior?  As an application of the previous variational expressions and to obtain some partial results in this direction, consider the p-Willmore energy introduced earlier,
\begin{equation}
    \mathcal{W}^p(M) = \int_M H^p \, dS, \qquad p\geq 1.
\end{equation}
It is enlightening to examine how the properties of this functional change with the value of $p$.  Notice that the case $p=2$ recovers the usual Willmore energy functional for immersed surfaces in Euclidean space, justifying the terminology.
\begin{remark}
Note that the definition of p-Willmore energy could be further extended to include the area functional as the case $p=0$.  This would incorporate the well-studied minimal surfaces as critical points of the 0-Willmore energy.
\end{remark}

% \begin{remark}
% Note that in light of \cite{dalphin2016} the functional $\mathcal{W}^p$ is not bounded from below among surfaces of fixed area for odd values of $p$. Indeed, using their construction obtained by gluing a sphere $S^2(r)$ and a half-sphere of radius $\varepsilon$, denoted $\Sigma_\varepsilon$, one computes that 
% \begin{equation}
%     \int_{\Sigma_\varepsilon} H^p\, dS = - \bigg(2-\frac{\pi}{2}\bigg)\pi\varepsilon^{2-p} + o(\varepsilon^{2-p}),
% \end{equation}
% where $o(x^p)$ denotes growth strictly slower than $x^p$. By an argument identical to that found in the proof of \cite{dalphin2016} Thm 1.2, it can then be shown that there is a surface (also denoted $\Sigma_\varepsilon$) obtained by gluing many such half-spheres to $S^2(r)$ that satisfies
% \begin{equation}
%     \int_{\Sigma_\varepsilon} H^p\, dS = - \bigg(2-\frac{\pi}{2}\bigg)\frac{\pi^2 r^2} {\varepsilon^p} + o\bigg(\frac{1}{\varepsilon^{p}}\bigg) \longrightarrow -\infty,
% \end{equation}
% as $\varepsilon \rightarrow 0$, verifying the claim.
% \end{remark}

The following result is a direct consequence of Theorems \ref{firstvar} and \ref{secondvar}.
\begin{corollary}\label{pwillvar}
The first variation of $\mathcal{W}^p$ is given by
\begin{equation}\label{p1var}
\begin{split}
&\delta\int_{M}H^p \, dS= \int_M \left(\frac{p}{2}H^{p-1}\Delta u + (2H^2-K+2k_0)pH^{p-1}u-2H^{p+1}u\right) \, dS,
\end{split}
\end{equation}
Moreover, the second variation of $\mathcal{W}^p$ at a critical immersion is
\begin{equation} \label{p2var}
\begin{split}
&\delta^2\int_M H^p\, dS = \int_M \frac{p(p-1)}{4}H^{p-2}(\Delta u)^2\, dS\\ 
&+ \int_M pH^{p-1}\big(h(\nabla u,\nabla u)+2u\langle h,\text{Hess}\,u\rangle +u\langle \nabla H, \nabla u\rangle -H|\nabla u|^2\big)\, dS\\
&+ \int_M \bigg((2p^2-4p-1)H^p-p(p-1)KH^{p-2}+2p(p-1)k_0H^{p-2}\bigg)u\Delta u \, dS \\
&+ \int_M \bigg(4p(p-1)H^{p+2}-2(p-1)(2p+1)KH^p+p(p-1)K^2H^{p-2}\\
&\phantom{\int_M h} +4(2p^2-2p-1)k_0H^p-4p(p-1)k_0KH^{p-2}+4p(p-1)k_0^2H^{p-2}\bigg)u^2\, dS.
\end{split}
\end{equation}
\end{corollary}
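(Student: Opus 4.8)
The plan is to obtain Corollary \ref{pwillvar} as a direct specialization of the general variation formulas in Theorems \ref{firstvar} and \ref{secondvar}, since $\mathcal{W}^p$ is precisely the functional $\mathcal{F}$ with integrand $\mathcal{E}(H,K) = H^p$. First I would record the relevant partial derivatives: with $\mathcal{E} = H^p$ one has $\mathcal{E}_H = pH^{p-1}$ and $\mathcal{E}_{HH} = p(p-1)H^{p-2}$, while every derivative involving $K$ vanishes identically, namely $\mathcal{E}_K = \mathcal{E}_{HK} = \mathcal{E}_{KK} = 0$. The key structural observation is that this vanishing collapses the bulk of the general second-variation formula (\ref{2var}): every term carrying a factor of $\mathcal{E}_K$, $\mathcal{E}_{HK}$, or $\mathcal{E}_{KK}$ drops out, so that only the comparatively few monomials built from $\mathcal{E}$, $\mathcal{E}_H$, and $\mathcal{E}_{HH}$ survive.

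For the first variation, substituting $\mathcal{E}_K = 0$ and $\mathcal{E}_H = pH^{p-1}$ directly into (\ref{1var}) immediately yields (\ref{p1var}), with the $\langle h, \text{Hess}\,u\rangle$ term disappearing entirely. For the second variation I would then proceed term by term through (\ref{2var}). The $(\Delta u)^2$ coefficient reduces to $\frac{1}{4}p(p-1)H^{p-2}$; the entire $\langle h,\text{Hess}\,u\rangle^2$, $\Delta u\,\langle h,\text{Hess}\,u\rangle$, and $\mathcal{E}_K(\cdots)$ lines vanish; and the gradient-and-Hessian block governed by the coefficients $2\mathcal{E}_H + 6H\mathcal{E}_K - \cdots$, $\mathcal{E}_H + 4H\mathcal{E}_K$, and $2(K-k_0)\mathcal{E}_K + H\mathcal{E}_H$ collapses to the single factor $pH^{p-1}\big(h(\nabla u,\nabla u) + 2u\langle h,\text{Hess}\,u\rangle + u\langle\nabla H,\nabla u\rangle - H|\nabla u|^2\big)$, matching the second line of (\ref{p2var}).

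The two remaining coefficients — those of $u\Delta u$ and of $u^2$ — are where the real bookkeeping lives, and the $u^2$ coefficient is the main obstacle. For $u\Delta u$ one expands $(2H^2 - K + 2k_0)\mathcal{E}_{HH} - 2H\mathcal{E}_H - \mathcal{E}$ and gathers powers of $H$ to obtain $(2p^2 - 4p - 1)H^p - p(p-1)KH^{p-2} + 2p(p-1)k_0 H^{p-2}$. The $u^2$ coefficient requires expanding $(2H^2 - K + 2k_0)^2\,\mathcal{E}_{HH} - 2HK\mathcal{E}_H + 2(K - 2k_0)\mathcal{E}$ in full; the square produces six monomials, and after multiplying by $p(p-1)H^{p-2}$ and combining with the remaining two terms one must carefully collect the coefficients of $H^{p+2}$, $KH^p$, $k_0 H^p$, $K^2H^{p-2}$, $k_0 K H^{p-2}$, and $k_0^2 H^{p-2}$. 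The only nonobvious simplifications are the factorizations $-4p^2 + 2p + 2 = -2(p-1)(2p+1)$ for the $KH^p$ coefficient and $8p^2 - 8p - 4 = 4(2p^2 - 2p - 1)$ for the $k_0 H^p$ coefficient, which deliver the stated form (\ref{p2var}). Since no analytic difficulty arises beyond this algebra, the corollary follows.
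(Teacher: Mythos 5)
Your proposal is correct and follows exactly the route the paper takes: the paper presents Corollary \ref{pwillvar} as a direct consequence of Theorems \ref{firstvar} and \ref{secondvar}, obtained by substituting $\mathcal{E}=H^p$, $\mathcal{E}_H=pH^{p-1}$, $\mathcal{E}_{HH}=p(p-1)H^{p-2}$, and $\mathcal{E}_K=\mathcal{E}_{HK}=\mathcal{E}_{KK}=0$ into (\ref{1var}) and (\ref{2var}). Your term-by-term reductions and the two factorizations $-4p^2+2p+2=-2(p-1)(2p+1)$ and $8p^2-8p-4=4(2p^2-2p-1)$ all check out against (\ref{p2var}).
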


\begin{remark}
When $M$ is closed, the Euler-Lagrange equation associated to the first variation of $\mathcal{W}^p$ is
\begin{equation}\label{HpEL}
\frac{p}{2}\Delta(H^{p-1})+ p(2H^2-K+2k_0)H^{p-1}-2H^{p+1} = 0.
\end{equation}
\end{remark}
 %As an application of our formulae in the previous section, we will prove a basic instability result about the $p$-Willmore energy of spheres in $\mathbb{E}^3$.

% As expected, when $p=2$ we recover the well-known (see e.g.  \cite{weiner1978}, \cite{bryant1984}, \cite{athukorallage2015}) Euler-Lagrange equation for the Willmore functional in $\mathbb{M}^3(k_0)$,
% \begin{equation}
%     \Delta H + 2H(H^2-K+2k_0) = 0.\footnote{Note that this equation changes slightly when the integrand includes a sectional curvature term, as appears frequently in the literature.}
% \end{equation}

Since the round sphere $S^2(r)$ of radius $r$ is the simplest closed surface immersed in Euclidean space, it is reasonable to consider how it behaves with respect to $\mathcal{W}^p$. Further, when discussing physical applications it is natural to allow only variations that are volume-preserving. For example, as mentioned in \cite{elliott2017} biomembranes are typically semipermeable, allowing only for the diffusion of certain ions. Therefore, when a membrane exists in a solution that has equal concentrations of solute on either of its sides, any deformation the membrane undergoes will necessarily preserve its volume.  Mathematically, this is formulated through the volume functional (see \cite{barbosa2012})
\begin{equation}\label{volfunc}
    \mathcal{V}(M) = \int_{M\times [0,t]} \*r^*(dV)
\end{equation}
where $\*r^*$ denotes pullback through the immersion $\*r$ (thought of as a map on $M\times \mathbb{R}$) and $dV$ is the volume form on $N$.  If $u$ is the normal velocity of this family, volume preservation is then imposed by requiring
\begin{equation}
    \delta \mathcal{V} = \int_M u \, dS = 0.
\end{equation}
With this perspective, (\ref{p1var}) and (\ref{p2var}) can be applied to prove Theorem \ref{bigsphere}.  To that end, note the following propositions.

\begin{proposition}\label{badsphere}
The sphere $S^2(r)$ immersed in Euclidean space is not a stable local minimum of $\mathcal{W}^p$ under volume-preserving deformations for $p > 2$. That is, for each $p > 2$, there exists a deformation $u$ such that
$\int_{S^2(r)} u\, dS = 0$, but
\begin{equation}
\delta^2 \int_{S^2(r)} H^p \, dS < 0.
\end{equation}
\end{proposition}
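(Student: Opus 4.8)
The plan is to specialize the second-variation formula (\ref{p2var}) of Corollary \ref{pwillvar} to $S^2(r) \subset \mathbb{E}^3$ and then exhibit a single admissible variation on which it is negative. First I would record the relevant geometry: here $k_0 = 0$, the mean and Gauss curvatures $H = 1/r$ and $K = 1/r^2$ are constant (so $\nabla H = 0$), and the sphere is totally umbilic with $h = \tfrac{1}{r}g$, which via the umbilic relations gives $\langle h, \text{Hess}\,u\rangle = \tfrac{1}{r}\Delta u$ and $h(\nabla u, \nabla u) = \tfrac{1}{r}|\nabla u|^2$. Substituting these into (\ref{p2var}), every $k_0$-term vanishes, the $|\nabla u|^2$ contributions cancel, and the Hessian terms become multiples of $\Delta u$, so that the second variation reduces to the quadratic functional
\[
\delta^2 \mathcal{W}^p = \int_{S^2(r)} \Big[ \tfrac{p(p-1)}{4} r^{2-p}(\Delta u)^2 + (p^2 - p - 1) r^{-p}\, u\Delta u + (p-1)(p-2) r^{-(p+2)} u^2 \Big]\, dS.
\]
Carrying out this collection accurately is the main bookkeeping step, since (\ref{p2var}) is long and several of its terms must be cancelled or combined.

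Next I would evaluate this functional on an eigenfunction of the Laplacian. The spectrum of $\Delta$ on $S^2(r)$ is $\lambda_\ell = \ell(\ell+1)/r^2$, and I would take $u$ in the first nonconstant eigenspace, so that $\Delta u = -\tfrac{2}{r^2}u$; these are the restrictions of the linear coordinate functions. This choice is decisive for two reasons. First, such $u$ are $L^2$-orthogonal to the constants, hence $\int_{S^2(r)} u\, dS = 0$ and the deformation is volume-preserving, making it admissible. Second, substituting $\Delta u = -\tfrac{2}{r^2}u$ turns each term into a multiple of $u^2$, with scalar coefficient $r^{-(p+2)}\big[ p(p-1) - 2(p^2 - p - 1) + (p-1)(p-2) \big]$, and the bracket collapses to $-2(p-2)$. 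Therefore $\delta^2 \mathcal{W}^p = -2(p-2)\, r^{-(p+2)} \int_{S^2(r)} u^2\, dS$, which is strictly negative for every $p > 2$, establishing the proposition.

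The point I expect to require the most care is not the algebra but the handling of the volume constraint together with criticality. The sphere is not a critical immersion of the unconstrained energy --- its Euler-Lagrange expression (\ref{HpEL}) evaluates to the nonzero residual $(p-2)/r^{p+1}$ --- so (\ref{p2var}), which was derived at a critical immersion, cannot be applied blindly. Concretely I would restrict to the normal variations of (\ref{vary}) for which the second-order data $\dot u$ vanishes; then the $\dot u$-terms dropped in deriving (\ref{p2var}) are identically zero and the formula is exact, while the first-order condition $\int u\, dS = 0$ encodes volume preservation. It is worth flagging that this enforces the constraint only to first order: a sharper second-order analysis would pass to the augmented functional $\mathcal{W}^p - \lambda \mathcal{V}$ with $\lambda$ equal to the residual $(p-2)/r^{p+1}$ and adjoin the Lagrange correction $-\lambda\, \delta^2 \mathcal{V}$. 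Since the proposition asserts precisely the existence of a first-order volume-preserving $u$ rendering the second variation (\ref{p2var}) negative, the first-harmonic computation above settles it directly, and I would simply note this distinction to keep the statement and its justification aligned. The same reduction, carried out for a general eigenvalue $\lambda_\ell$, also produces the quadratic in $\ell(\ell+1)$ needed for the sharper dichotomy in Theorem \ref{bigsphere}.
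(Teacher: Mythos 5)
Your proposal is correct and follows essentially the same route as the paper: you reduce (\ref{p2var}) on the round sphere to the quadratic form $\frac{1}{r^p}\int_{S^2(r)}\big(\tfrac{p(p-1)r^2}{4}(\Delta u)^2+(p^2-p-1)u\Delta u+\tfrac{(p-1)(p-2)}{r^2}u^2\big)\,dS$ and evaluate it on a first spherical harmonic, where the coefficient collapses to $-2(p-2)$, exactly as in the paper's proof. Your closing remark about constrained criticality and the possible Lagrange correction $-\lambda\,\delta^2\mathcal{V}$ is a fair caveat that the paper passes over silently, but it does not change the argument.
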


\begin{proof}
The volume condition is imposed by considering only variations $u$ such that $\delta \mathcal{V} = \int_{S^2(r)} u \,dS = 0.$ 
In this case, it is immediate that $S^2(r)$ is a critical point of $\mathcal{W}^p$ for any $p\geq 1$, since
\begin{equation}
\delta\int_{S^2(r)} H^p\, dS = \frac{p-2}{r^{p+1}}\int_{S^2(r)} u\, dS = 0.
\end{equation}
 On the other hand, on $S^2(r)$ the second variation becomes
%\footnote{The reader can verify that this is consistent with the expressions found in \cite{weiner1978}, \cite{mondino2014}.}
\begin{equation} \label{sph}
\frac{1}{r^p}\int_{S^2(r)} \bigg(\frac{p(p-1)r^2}{4}(\Delta u)^2 + (p^2-p-1)u\Delta u + \frac{(p-1)(p-2)}{r^2}u^2\bigg)\,\, dS.
\end{equation}
Moreover, using the spectrum of the Laplacian on the sphere there is $u$ such that $\Delta u + (2/r^2)u = 0$.  In this case, the above expression is
\begin{equation}
\begin{split}
&\frac{1}{r^p}\int_{S^2(r)} \bigg(\frac{p(p-1)r^2}{4r^4}(-2u)^2  +\frac{2(p^2-p-1)}{r^2}u(-2u) + \frac{(p-1)(p-2)}{r^2}u^2\bigg)\,\, dS \\
&= \frac{1}{r^{p+2}}\int_{S^2(r)}\bigg(p(p-1)-2(p^2-p-1)+(p-1)(p-2)\bigg) u^2 \, dS \\
&= \frac{1}{r^{p+2}}\int_{S^2(r)} 2u^2(2-p)\, dS < 0, \qquad p>2,
\end{split}
\end{equation}
which proves the claim. 
\end{proof}

It is interesting to note that that the case $p=2$ is also the only case where $\mathcal{W}^p$ is invariant under conformal transformations of the ambient space.  It follows from the above proposition that if the sphere is to be minimizing among some subclass of surfaces for higher $p$, there must be further restrictions placed on the allowed variations.  To continue, note the following Poincar\'e inequality from \cite{elliott2017}.

\begin{lemma}\label{EFH}
Let $\perp$ denote orthogonality with respect to the $L^2$ inner product.  For any smooth nonconstant function $u:S^2(r) \to \mathbb{R}$ such that  $u\in\{v: \Delta v = -(2/r^2)v\}^\perp$,
	\begin{equation}
	\int_{S^2(r)} u^2 \, dS\leq\frac{r^2}{6}\int_{S^2(r)} |\nabla u|^2 \, dS\leq \frac{r^4}{36} \int_{S^2(r)} (\Delta u)^2 \, dS.
	\end{equation}
\end{lemma}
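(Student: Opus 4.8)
The plan is to prove both inequalities simultaneously by expanding $u$ in the eigenbasis of the Laplace--Beltrami operator on $S^2(r)$ and reducing each estimate to a term-by-term comparison of eigenvalues. First I would recall the spectrum: on the round sphere of radius $r$ the operator $-\Delta$ has eigenvalues $\lambda_\ell = \ell(\ell+1)/r^2$ for $\ell = 0,1,2,\dots$, with eigenspaces given by the degree-$\ell$ spherical harmonics. In particular $\lambda_0 = 0$ (the constants), $\lambda_1 = 2/r^2$, and $\lambda_2 = 6/r^2$; note that the threshold $\lambda_2 = 6/r^2$ is exactly the reciprocal of the constant $r^2/6$ appearing in the statement, which is what makes the spectral approach natural here.

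Next I would fix an $L^2$-orthonormal basis of eigenfunctions $\{\phi_j\}$ with $-\Delta\phi_j = \mu_j\phi_j$ and write $u = \sum_j a_j \phi_j$. Parseval's identity together with integration by parts (there is no boundary) then yields the three expressions $\int_{S^2(r)} u^2\,dS = \sum_j a_j^2$, $\int_{S^2(r)} |\nabla u|^2\,dS = \sum_j \mu_j a_j^2$, and $\int_{S^2(r)} (\Delta u)^2\,dS = \sum_j \mu_j^2 a_j^2$. The hypothesis $u \in \{v : \Delta v = -(2/r^2)v\}^\perp$ forces $a_j = 0$ for every $\phi_j$ lying in the $\lambda_1$-eigenspace, and the fact that $u$ is orthogonal to the constants (in the intended volume-preserving setting this is precisely $\int_{S^2(r)} u\,dS = 0$) kills the $\lambda_0$-component as well. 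Consequently every surviving index satisfies $\mu_j \geq \lambda_2 = 6/r^2$, which is the single fact driving both estimates.

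With the spectrum of $u$ supported on $\mu_j \geq 6/r^2$, each inequality follows from a pointwise comparison of the coefficients. For the first, $\sum_j a_j^2 \leq \tfrac{r^2}{6}\sum_j \mu_j a_j^2$ holds because $1 \leq \tfrac{r^2}{6}\mu_j$ on every active mode, which is exactly $\int u^2 \leq \tfrac{r^2}{6}\int |\nabla u|^2$. For the comparison between the gradient and Laplacian terms, $\sum_j \mu_j a_j^2 \leq \tfrac{r^2}{6}\sum_j \mu_j^2 a_j^2$ holds because $\mu_j \leq \tfrac{r^2}{6}\mu_j^2$ whenever $\mu_j \geq 6/r^2$; multiplying through by $r^2/6$ gives $\tfrac{r^2}{6}\int |\nabla u|^2 \leq \tfrac{r^4}{36}\int (\Delta u)^2$. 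Chaining the two produces the full chain of inequalities claimed in the statement.

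The computation is essentially bookkeeping once the spectral facts are in place, so the only genuine obstacle is justifying that the active spectrum of $u$ begins at $\lambda_2$. The $\ell = 1$ modes are removed directly by hypothesis, while the constant ($\ell = 0$) mode must be removed by the mean-zero condition supplied by the application. This assumption is \emph{essential} rather than cosmetic: without $\int_{S^2(r)} u\,dS = 0$ the first inequality genuinely fails, as is seen by taking $u = 1 + Y_2$ with $Y_2$ a degree-two harmonic, for which $\int u^2 = 4\pi r^2 + \|Y_2\|^2$ while $\tfrac{r^2}{6}\int|\nabla u|^2 = \|Y_2\|^2$. I would therefore state the hypotheses so that $u$ is orthogonal to both the constants and the $\lambda_1$-eigenspace, which is exactly the class of deformations to which the lemma is applied in the sphere-stability argument.
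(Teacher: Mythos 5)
Your argument is correct, and it overlaps substantially with the paper's: both proofs rest on the spectrum $\lambda_k = k(k+1)/r^2$ and the observation that excluding the $\lambda_0$- and $\lambda_1$-modes pushes the Rayleigh quotient up to $\lambda_2 = 6/r^2$, which gives the first inequality. Where you diverge is the second inequality. The paper does not expand in eigenfunctions there; instead it bootstraps from the first inequality via integration by parts and Cauchy--Schwarz, writing $\|\nabla u\|_2^2 = -\int u\,\Delta u \leq \|u\|_2\,\|\Delta u\|_2$ and then $\|\nabla u\|_2^4 \leq \|u\|_2^2\,\|\Delta u\|_2^2 \leq \tfrac{r^2}{6}\|\nabla u\|_2^2\,\|\Delta u\|_2^2$ to conclude $\|\nabla u\|_2^2 \leq \tfrac{r^2}{6}\|\Delta u\|_2^2$. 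Your term-by-term comparison $\mu_j \leq \tfrac{r^2}{6}\mu_j^2$ for $\mu_j \geq 6/r^2$ proves the same thing more directly and treats both inequalities uniformly; the paper's route has the mild advantage of not requiring the full Parseval machinery beyond what the first step already used, but is otherwise no shorter. Your remark about the hypotheses is a genuinely good catch: the lemma as stated assumes only that $u$ is nonconstant and orthogonal to the $\lambda_1$-eigenspace, and ``nonconstant'' does not remove the constant Fourier mode, so your example $u = 1 + Y_2$ really does violate the first inequality. The paper's own proof silently relies on the mean-zero condition (which is supplied in the application by volume preservation), so your explicit restatement of the hypothesis as orthogonality to both the constants and the first eigenspace is the correct repair rather than a cosmetic one.
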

\begin{proof} Since this result is integral to the following stability analysis, a proof is presented.  Recall the solutions $\lambda_k$ with multiplicities $N_k$ to the eigenvalue problem $\Delta u + \lambda u = 0$ on $S^2(r)$ (see \cite{shubin2001}):
\begin{equation}
    \lambda_k = \frac{k(k+1)}{r^2}, \qquad N_k = {k+2\choose 2}.
\end{equation}
Clearly the constant function $1$ spans the $\lambda_0$-eigenfunctions.  Further, $\lambda_1 = 2/r^2$, so it follows that for all nonconstant $u \in \{v: \Delta v = -(2/r^2)v\}^\perp$, 
\begin{equation}
    \frac{6}{r^2} = \lambda_2 \leq \frac{\int_{S^2(r)} |\nabla u|^2\, dS}{\int_{S^2(r)} u^2\, dS},
\end{equation}
proving the first inequality.

Using $\|\cdot\|_p$ to denote the usual $L^p$ norm on $S^2(r)$, one also has
\begin{equation}
\|u\|_2^2 \leq \frac{r^2}{6}\| \nabla u\|_2^2 \leq \frac{r^2}{6}\|u\Delta u \| _1 \leq \frac{r^2}{6} \|u\|_2 \, \|\Delta u\|_2,
\end{equation}
where the second inequality is due to integration by parts and the third is by Cauchy-Schwarz.  It follows from this and the work above that 
% $\|u\|_2 \leq  \frac{r^2}{6}\|\Delta u\|_2$, and hence
% \begin{equation}
%     \|u\|_2^2 \leq \frac{r^4}{36}\|\Delta u\|_2^2.
% \end{equation}
% On the other hand, from the work above it is also true that
\begin{equation}
    \|\nabla u\|_2^4 \leq \|u\|_2^2 \, \|\Delta u\|_2^2 \leq \frac{r^2}{6}\|\nabla u\|_2^2 \, \|\Delta u\|_2^2,
\end{equation}
so that $\|\nabla u\|_2^2 \leq (r^2/6)\|\Delta u\|_2^2$.  Hence,
\begin{equation}
  \|u\|_2^2 \leq \frac{r^2}{6} \|\nabla u\|_2^2 \leq \frac{r^4}{36}\|\Delta u\|_2^2,
\end{equation}
as desired. 
\end{proof}

With this estimate, it is apparent that the sphere remains a local minimum of $\mathcal{W}^p$ for $p>2$ provided consideration is given only to volume-preserving variations that are not first eigenfunctions of the surface Laplacian.

\begin{proposition}\label{goodsphere}
For all $p\geq 1$, the sphere $S^2(r)$ is a local minimum of $\mathcal{W}^p$ under volume-preserving, nonconstant deformations $u$ provided $u \in \{v : \Delta v =-(2/r^2)v \}^\perp$.  Further, the index form $I_{\mathcal{W}^p}(S^2(r))$ is coercive over this space.
\end{proposition}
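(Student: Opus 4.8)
The plan is to begin from the already-reduced form of the index form on the round sphere, namely expression (\ref{sph}), which incorporates the volume constraint through the criticality of $S^2(r)$. Writing $I_{\mathcal{W}^p}(S^2(r))(u,u) = r^{-p}\mathcal{Q}(u)$ with
\[
\mathcal{Q}(u) = \frac{p(p-1)r^2}{4}\int_{S^2(r)}(\Delta u)^2\,dS + (p^2-p-1)\int_{S^2(r)} u\,\Delta u\,dS + \frac{(p-1)(p-2)}{r^2}\int_{S^2(r)} u^2\,dS,
\]
I would first integrate by parts to replace $\int u\,\Delta u\,dS$ by $-\int|\nabla u|^2\,dS$, leaving a quadratic form in $\|\Delta u\|_2^2$, $\|\nabla u\|_2^2$, and $\|u\|_2^2$. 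The goal is to produce a bound $\mathcal{Q}(u)\ge c\,\|u\|_2^2$ with $c=c(p,r)>0$, which simultaneously yields strict positivity (local minimality) and $L^2$-coercivity over the admissible space.

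The cleanest route is to diagonalize $\mathcal{Q}$ against the Laplace eigenbasis. Decomposing $u=\sum_{k\ge 2}u_k$ into eigencomponents (the two conditions $\int u\,dS=0$ and $u\perp\{\Delta v=-(2/r^2)v\}$ remove precisely the $k=0$ and $k=1$ contributions, so the sum starts at $k=2$), the three integrals decouple and, using the spectrum $\lambda_k=k(k+1)/r^2$ recalled in Lemma \ref{EFH}, one obtains $\mathcal{Q}(u)=r^{-2}\sum_{k\ge 2}f\big(k(k+1)\big)\,\|u_k\|_2^2$, where $f(\mu)=\tfrac{p(p-1)}{4}\mu^2-(p^2-p-1)\mu+(p-1)(p-2)$. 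The whole proposition then reduces to the single scalar inequality $f(\mu)>0$ for all $\mu\ge 6$ and all $p\ge 1$.

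To establish this, I would note that for $p>1$ the parabola $f$ opens upward with vertex at $\mu^\ast=2-2/(p^2-p)<2$, so $f$ is strictly increasing on $[6,\infty)$ and it suffices to check $f(6)=4p^2-6p+8$, which is positive for every real $p$ since its discriminant $36-128$ is negative; the degenerate case $p=1$ gives $f(\mu)=\mu>0$ directly. Hence $f(k(k+1))\ge f(6)=4p^2-6p+8>0$, so that $I_{\mathcal{W}^p}(S^2(r))(u,u)\ge r^{-(p+2)}(4p^2-6p+8)\|u\|_2^2$, the asserted coercivity. Equivalently, and in keeping with the tools already assembled, one may avoid the spectral decomposition and bound the possibly negative lower-order terms directly through the Poincar\'e chain $\|u\|_2^2\le (r^2/6)\|\nabla u\|_2^2\le (r^4/36)\|\Delta u\|_2^2$ of Lemma \ref{EFH}: the same quantity $4p^2-6p+8$ reappears as the coefficient of $\|\Delta u\|_2^2$, after which the reverse bound $\|\Delta u\|_2^2\ge (36/r^4)\|u\|_2^2$ delivers coercivity in $L^2$.

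The main obstacle is uniformity in $p$: both coefficients $(p^2-p-1)$ and $(p-1)(p-2)$ change sign inside the range $p\ge 1$ (at $p=(1+\sqrt5)/2$ and at $p=2$), and at $p=1$ the leading $\|\Delta u\|_2^2$ term vanishes outright, so any naive term-by-term estimate is delicate and case-ridden. The spectral reduction sidesteps this by concentrating all $p$-dependence into the one polynomial $f(6)=4p^2-6p+8$, whose fixed negative discriminant makes positivity manifest regardless of where the individual coefficients change sign; the only point requiring care is confirming that the binding frequency is $\mu=6$, that is $\lambda_2$, which is exactly the spectral gap encoded in Lemma \ref{EFH}.
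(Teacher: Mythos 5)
Your proof is correct and follows essentially the same route as the paper: both arguments reduce the positivity of the index form (\ref{sph}) to the spectral gap $\lambda_2=6/r^2$ on $S^2(r)$ encoded in Lemma \ref{EFH}, after removing the $k=0,1$ modes via the volume constraint and the orthogonality hypothesis. Your explicit diagonalization is just a sharper packaging of the same estimate (your constant $4p^2-6p+8$ is four times the paper's $(2p^2-3p+4)/2$, and both are positive for all $p$ since the discriminant is negative), so there is nothing further to flag.
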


\begin{proof}
By Lemma \ref{EFH}, (\ref{sph}), and integration-by-parts, it follows that
\begin{equation}
\begin{split}
   I_{\mathcal{W}^p}(S^2(r))(u,u) &= \delta^2 \int_{S^2(r)}H^p\,dS \geq \int_{S^2(r)} \frac{2p^2-3p+4}{2r^2}\,u^2 \, dS \\ 
   &\geq C(p,r)\int_{S^2(r)}u^2\, dS,
\end{split}
\end{equation}
for all allowed values of $p$. 
\end{proof}

Recall that the first eigenfunctions of $\Delta$ on $S^2(r)$ are spanned by the component functions of the position vector, which is normal to the sphere at every point.  Hence, Proposition \ref{badsphere} states that there are always volume-preserving deformations of the sphere that decrease the p-Willmore energy, and Proposition \ref{goodsphere} confirms that the only deformations which accomplish this are those that act as the components of position.  The proof of Theorem \ref{bigsphere} now follows immediately from these propositions.

\begin{remark}
    Another natural class of variations to consider are those that preserve surface area.  It is remarkable that a result completely analogous to Theorem~\ref{bigsphere} can also be proven in this case.
\end{remark}

\begin{remark}
It is interesting to note that second-order phenomena are at work here.  In particular, one can see from the second variation of the volume functional (\ref{volfunc}),
\begin{equation}
    \delta^2 \mathcal{V}(M) = \int_M -2Hu^2\, dS,
\end{equation}
that $\delta^2\mathcal{V}(M)<0$ for surfaces $M$ where $H>0$ pointwise.  Indeed, $H: S^2 \times \mathbb{R} \to \mathbb{R}$ is continuous and $\{(p,t): H(p,t)>0\}$ is an open set containing the initial (compact) sphere, so there are values $t>t_0$ where $H$ remains pointwise positive during any continuous deformation.  Hence, the volume initially decreases while the surface moves away from the sphere.
Further, for nonconstant variations $u$ of $S^2(r)$ such that $u \in \{v: \Delta v = -(2/r^2)v\}$, it follows from Lemma \ref{evols} that
\begin{equation}
    \delta(2H) = \Delta u + 2(2H^2-K)u = -\frac{2}{r^2}u + \frac{2}{r^2}u = 0.
\end{equation}
Therefore, the mean curvature (and hence the value of the functional $\mathcal{W}^p)$ does not change to first order during the deformations that cause instability.
\end{remark}

\section{Appendix}
Note the following conventions:
\begin{itemize}
    \item Einstein summation is assumed throughout, so that any index repeated twice in an expression (once up and once down) will be contracted over its appropriate range.
    \item Differentiation of a function $f$ with respect to the variable $x^j$ is denoted by $f_j$.
    \item Given a $t$-parametrized variation, the variational derivative operator is denoted by $\delta = (d/dt)\big|_{t=0}$.
\end{itemize}

\begin{proof}[Proof of Lemma \ref{evols}]
First, there is the variation of the metric:
without loss of generality, assume $\langle \*{r}_i,\*{r}_j \rangle = 0$ on $M_t$.  Using $\langle \*{N}, \*{r}_j \rangle = 0$,
	\begin{equation}
		\begin{split}
		\delta g_{ij} &= \frac{d}{dt} \big\langle \*{r}_i, \*{r}_j \big\rangle\bigg|_{t=0} = 2 \big \langle (\delta\*{r})_i, \*{r}_j \big\rangle = 2 \langle u_i \*{N} + u\*{N}_i , \*{r}_j \rangle \\
		&= 2u_i \langle \*{N}, \*{r}_j \rangle + 2u \langle \*{N}_i, \*{r}_j \rangle = -2uh_{ij}.
		\end{split}
	\end{equation}
	Since $g^{il}g_{lk}=\delta_{k}^i$, it follows that
		\begin{equation}
		(\delta g^{il})g_{lk} = -g^{il}(\delta g_{lk}) = 2ug^{il}h_{lk},
		\end{equation}
	and hence
		\begin{equation}
		\delta g^{ij} = 2ug^{il}g^{jk}h_{lk} = 2uh^{ij}.
		\end{equation}
Using this, there is the variation of the area element:
	Recall the Jacobi formula
	\begin{equation} \label{Jacobi}
	\begin{split}
	\frac{d}{dt} \det A = \det A \; \mathrm{tr} \left(A^{-1} \frac{dA}{dt}\right).
	\end{split}
	\end{equation}
Letting $\mathbf{g} = g_i^j$ be the matrix representation of the metric, it follows that
\begin{equation} \label{detg}
	\frac{d}{dt}\det(\mathbf{g}) = \det(\mathbf{g}) \; \mathrm{tr} \left(\mathbf{g}^{-1} \frac{d\mathbf{g}}{dt}\right) = \det(\mathbf{g}) (-2ug^{ij}h_{ij}) = -4 Hu \det(\mathbf{g}).
\end{equation}
Using (\ref{detg}), the variation of the surface area functional $\mathcal{A}$ is seen to be
\begin{equation} \label{surfvar}
	\begin{split}
	&\delta \mathcal{A} = \delta \int_M dS = \int_U \delta \sqrt{\det(\*g)} \, dA = \int_U \frac{1}{2\sqrt{\det(\*g)}} \, \delta (\det(\*g)) \, dA \\
	&= \int_U 2 Hu \sqrt{\det(\*g)} \, dA = \int_M -2Hu \, dS.
	\end{split}
\end{equation}
Using (\ref{surfvar}) and observing the commutativity of $d$ and $\delta$ yields the variation of the area element $dS$,
\begin{equation} \label{volform}
	\delta (dS) = d(\delta \mathcal{A}) = d \int_M -2 Hu \, dS = -2 Hu \, dS.
\end{equation}
It is now necessary to compute the variation of the shape operator $h = h_{ij}\, dx^i \otimes dx^j$. Observe,
\begin{equation} \label{hij'}
	\delta(h_{ij}) = \delta\langle \mathbf{N}, \mathbf{r}_{ij} \rangle = \langle \delta\mathbf{N}, \mathbf{r}_{ij} \rangle + \langle \mathbf{N}, \delta\mathbf{r}_{ij} \rangle.
\end{equation}
It is advantageous to compute each term of (\ref{hij'}) separately. Since $\mathbf{r}_i,\mathbf{r}_j$ is a basis for $TM$ at each point, the variation of the normal field can be expressed as $\delta\mathbf{N} = c^i\mathbf{r}_i$ for some functions $c^i$, so that
\begin{equation}
    \langle \delta\mathbf{N}, \mathbf{r}_j \rangle = \langle c^i \mathbf{r}_i, \mathbf{r}_j \rangle = c^i = -\langle \mathbf{N}, \delta\mathbf{r}_j \rangle = -\langle \mathbf{N}, u_i \mathbf{N} + u\mathbf{N}_i \rangle = -u_i,
\end{equation}
where it was used again that $\langle \mathbf{N}, \mathbf{r}_j \rangle = \langle \mathbf{N}, \mathbf{N}_j \rangle = 0$.  It follows that  $\delta\mathbf{N} = -g^{ij}u_i\mathbf{r}_j$, whereby using (\ref{Gauss}) and working in normal coordinates one sees
\begin{equation}
	\big\langle \delta\mathbf{N}, \mathbf{r}_{ij} \big\rangle = -\big\langle u^l\mathbf{r}_l, (h_{ij}\mathbf{N} + \Gamma_{ij}^k \mathbf{r}_k - g_{ij}k_0\mathbf{r}) \big\rangle = g_{ij}k_0u.
\end{equation}
Further, by (\ref{Weingarten})
\begin{equation}
	\begin{split}
		\langle \mathbf{N},\delta\mathbf{r}_{ij} \rangle &= \langle \mathbf{N}, (u\mathbf{N})_{ij} \rangle = u_{ij}+ u \langle \mathbf{N}, \mathbf{N}_{ij} \rangle = u_{ij} -u\langle h_i^l \mathbf{r}_l, h_j^k \mathbf{r}_k \rangle \\
		&= u_{ij} - uh_{il}h^l_j.
	\end{split}
\end{equation}
Therefore the variation of the second fundamental form is 
\begin{equation}
	\delta(h_{ij}) = \langle \delta\mathbf{N}, \mathbf{r}_{ij} \rangle + \langle \mathbf{N}, \delta\mathbf{r}_{ij} \rangle = g_{ij}k_0u + u_{ij} - uh_{il}h_j^l,
\end{equation}
and it is now straightforward to compute $\delta(2H)$.  Indeed, it follows that 
\begin{equation}
	\begin{split} 
		\delta (2H) &= \delta(g^{ij}h_{ij}) = (\delta g^{ij})h_{ij} + g^{ij}(\delta h_{ij}) \\
		&= 2uh^{ij}h_{ij} + g^{ij}(u_{ij} - uh_{il}h_j^l + g_{ij}k_0u)\\
		&= \Delta{u}+u(4H^2-2K+4k_0).
	\end{split}
\end{equation}
Further, there is the variation of the norm of the second fundamental form,
\begin{equation}
	\begin{split}
	    \delta |h|^2 &= \delta(h^{ij}h_{ij}) = \delta(g^{ik}g^{jl}h_{kl}h_{ij}) = 2uh^{ik}g^{jl}h_{kl}h_{ij} + 2uh^{jl}g^{ik}h_{kl}h_{ij} \\
	    &+ g^{ik}g^{jl}(g_{kl}k_0u + u_{kl} - uh_{ks}h^s_l)h_{ij} + g^{ik}g^{jl}h_{kl}(g_{ij}k_0u + u_{ij} - uh_{is}h^s_j) \\
	    &= 2uh^{ik}h^j_k h_{ij} + 4Huk_0 + 2\langle h,\text{Hess}\,u\rangle \\
	    &= 2(8H^3-6HK+8Hk_0)u + 2\langle h,\text{Hess}\,u \rangle,
	\end{split}
\end{equation}
where it was used that $8H^3 = (\kappa_1+\kappa_2)^3 = \kappa_1^3 + \kappa_2^3 + 6H(K-k_0)$.  The variation of the extrinsic Gauss curvature $K_E$ now follows, since $\delta |h|^2 = \delta(4H^2 - 2K + 2k_0)$, so
\begin{equation}
\delta K_E = 2H\,\delta(2H) - \frac{1}{2}\delta|h|^2 = 2H\Delta u - \langle h,\text{Hess}\,u\rangle + 2HKu.
\end{equation}
Since the variation of the intrinsic Gauss curvature $K$ satisfies $\delta K = \delta (K_E + k_0)$ and $k_0$ is constant, we have $\delta K = \delta K_E$, completing the calculation.
\end{proof}

% \bibliographystyle{unsrt}
% \bibliography{paperbib}

\end{document}